\providecommand{\U}[1]{\protect\rule{.1in}{.1in}}
\numberwithin{equation}{section}
\newtheorem{theorem}{Theorem}[section]
\newtheorem{corollary}[theorem]{Corollary}
\newtheorem{definition}[theorem]{Definition}
\newtheorem{lemma}[theorem]{Lemma}
\newtheorem{proposition}[theorem]{Proposition}
\newtheorem{remark}[theorem]{Remark}
\newenvironment{proof}[1][Proof]{\noindent\textbf{#1.} }{\ \rule{0.5em}{0.5em}}
\newcommand{\spann}{\mathop{\rm span}\nolimits}
\def\N{\mathbb{N}}
\begin{document}

\title{Remarks on $j-$eigenfunctions of operators}
\author{D. E. Edmunds
\and J. Lang}
\date{}
\maketitle

\begin{abstract}
	The paper is largely concerned with the possibility of obtaining a
	series representation for a compact linear map $T$ acting between Banach
	spaces. It is known that, using the notions of $j-$eigenfunctions and $j-$%
	eigenvalues, such a representation is possible under certain conditions on $T
	$. Particular cases discussed  include those in which $T$ can be factorised through 
	a Hilbert space, or has certain $s$-numbers that are fast-decaying. The notion of $p-$compactness proves to be useful in this context; we
	give examples of maps that possess this property. 
\end{abstract}

\section{\bigskip Introduction}
In the last few years advances have been made towards the goal of being able
to represent in series form the action of a compact linear map acting between
Banach spaces. To give a brief indication of what has been acheived, let $X,Y$
be Banach spaces both of which are uniformly convex and uniformly smooth, and
let $T$ be a compact linear map from $X$ to $Y.$ Then there is a decreasing
sequence $\left\{  X_{n} \right\}  $ of linear subspaces of $X$, each with finite
codimension and with trivial intersection if the kernel\ of $T$ is trivial; there
is an associated sequence $\left\{  x_{n}\right\}  $ in the unit sphere of
$X$ such that the norm $\lambda_{n}$ of the restriction $T_{n}$ of $T$ to
$X_{n}$ is attained at $x_{n}.$ The $x_{n}$ and $\lambda_{n}$ correspond to an
\textquotedblleft eigenvector" and an "eigenvalue" respectively of a nonlinear
operator equation involving a duality map; since the $x_{n}$ have a
semi-orthogonality property in the sense of James, they are referred to as
$j-$eigenvectors, the $\lambda_{n}$ being called $j-$eigenvalues. In certain
circumstances, and in particular if either $X$ or $Y$ is a Hilbert space, it
turns out (see \cite{EE} and \cite{EL}) that if $\ker T=\{0\}$ and $T$ has range dense in $Y$, then
\[
\text{ }Tx=\sum\nolimits_{n}\lambda_{n}\xi_{n}(x)y_{n}%
\]
for all $x\in X;$ here $y_{n}=Tx_{n}/\left\Vert Tx_{n}\right\Vert $ and the
$\xi_{n}(x)$ are recursively calculable. In addition if, for
example, $Y$ is a Hilbert space, then upper and lower bounds for the
approximation numbers of $T$ can be obtained in terms of the $\lambda_{n}.$
All this is reminiscent of the position when both $X$ and $Y$ are Hilbert
spaces. Indeed in that case, the $\lambda_{n}$ are classical eigenvalues of
$\left\vert T\right\vert =\left(  T^{\ast}T\right)  ^{1/2}$ and $\left\{
x_{n}\right\}  $ is orthogonal; every eigenvalue of $\left\vert T\right\vert $
occurs in $\left\{  \lambda_{n}\right\}  $ and has finite algebraic multiplicity. When
neither $X$ nor $Y$ is a Hilbert space, series representations of this type
have been given only under rather stringent conditions.  For example, it is sufficient to require that the
Gelfand numbers of $T$ decay fast enough; the  $y_{n}$ then form
basis of $Y$.  The book \cite{EE}
provides further details and background material. The usefulness of this approach is also illustrated
here by consideration of a $\left( p,p^{\prime }\right) -$ bi-Laplacian
boundary-value problem on a bounded interval $\left( 0,b\right):$ what
emerges is an exact description of the first eigenvalue of this problem.

For the convenience of the reader, in the early part of the paper we provide
a brief but precise summary of the ideas and results described roughly
above, including inequalities relating the $j-$eigenvalues to the Gelfand
numbers of the map. The Gelfand numbers are significant, for not only do
they provide some kind of measure of compactness of a map, but the
inequalities just mentioned help to show that when a compact map $T$ acts
from $X$ to $X$ \ and has classical eigenvalues $\widehat{\lambda }%
_{n}\left( T\right)$ (repeated according to algebraic multiplicity and ordered by
decreasing modulus) then under mild additional conditions, there is a
link between these different types of eigenvalues in the sense that 
\[
\left\vert \widehat{\lambda }_{n}\left( T\right) \right\vert
=\lim_{k\rightarrow \infty }\lambda _{n}\left( T^{k}\right) ^{1/k}\text{ \ }%
\left( n\in \mathbb{N}\right) .
\]%
Further progress is made when the compact map $T:X\rightarrow Y$ is
Hilbertian, by which we mean that it can be factorised through a Hilbert
space: \ that is, there are a Hilbert space $H$ and bounded linear maps $%
A:X\rightarrow H,$ $B:H\rightarrow Y$ such that $T=B\circ A.$ Sufficient
conditions for this to hold are given:\ for example, every nuclear map is
Hilbertian.

To obtain further illustrations we introduce the notion of $p-$%
compactness of an operator given in [19], for all $p\in \lbrack 1,\infty ].$
This enables a fine tuning to be made of the property of compactness: thanks
to a result of Grothendieck, ordinary compactness corresponds to $\infty -$%
compactness; \ and if $T$ is $p-$compact for some $p$ it is $q-$compact for
all $q\geq p.$ Every compact Hilbertian map is $2-$compact.
To
remedy the dearth of simple, concrete examples illustrating the notion of $p-
$compactness we show that the standard Hardy integral operator $T$, acting
between Lebesgue spaces on $I=\left[ 0,1\right] ,$ is $p-$compact for some $%
p<\infty .$ In particular, we show that when viewed as a map from \ $L_{2}(I)
$ to itself, $T$ is $p-$compact for all $p\in (1,\infty ]$ but is not
nuclear. Another example given involves a simple Sobolev embedding. 

At the abstract level what eventually emerges  is that the action of a compact Hilbertian map can be represented in form of series which is constructed by using $j$-eigenfunctions. When the components of a Hilbertian map are compact then the map can be described by means of a double series. It also
turns out that under appropriate conditions, a Banach space  that is the
target of a compact Hilbertian map has a basis which is related to $j$-eigenfunctions.

\bigskip

\section{Preliminaries and introduction of j-eigenvalues}
In this section we recall the definition and basic properties  of $j-$eigenfunctions and $j-$eigenvalues for compact linear operators  acting between Banach spaces and their connection with s-numbers.

Throughout this section we shall suppose that $X$ and $Y$ are real, reflexive,
infinite-dimensional Banach spaces, with norms $\left\Vert \cdot\right\Vert
_{X}$,$\left\Vert \cdot\right\Vert _{Y}$ and duals $X^{\ast},Y^{\ast},$ that
are uniformly smooth and uniformly convex; $B_{X}$ stands for the closed unit
ball in $X$ and has boundary $S_{X};$ and we shall write $B(X,Y)$ for the
space of all bounded linear maps from $X$ to $Y,$ abbreviating this to $B(X)$
when $X=Y.$ Similarly, $K(X,Y)$ and $K(X)$ will stand for the spaces of
compact linear maps. The value of $x^{\ast}\in X^{\ast}$ at $x\in X$ is
denoted by $\left\langle x,x^{\ast}\right\rangle _{X}.$ Given any closed
linear subspaces $M,N$ of $X,X^{\ast}$ respectively, their polar sets are
\[
M^{0}=\left\{  x^{\ast}\in X^{\ast}:\left\langle x,x^{\ast}\right\rangle
_{X}=0\text{ for all }x\in M\right\}
\]
and%
\[
^{0}N=\left\{  x\in X:\left\langle x,x^{\ast}\right\rangle _{X}=0\text{ for
all }x^{\ast}\in N\right\}  .
\]
Note that the strict convexity of $X$ and $X^{\ast}$ is inherited by
$M,X\backslash M,\left(  X\backslash M\right)  ^{\ast}$ and $M^{0}.$

A reflexive Banach space $Z$ has strictly convex dual if and only if $\left\Vert
\cdot\right\Vert _{Z}$ is G\^{a}teaux-differentiable on $Z\backslash\{0\}.$
The G\^{a}teaux derivative $\widetilde{J}_{X}(x):=\operatorname{grad}%
\left\Vert x\right\Vert _{X}$ of $\left\Vert x\right\Vert _{X}$ at $x\in
X\backslash\{0\}$ is the unique element of $X^{\ast}$ such that%
\[
\left\Vert \widetilde{J}_{X}(x)\right\Vert _{X^{\ast}}=1\text{ and
}\left\langle x,\widetilde{J}_{X}(x)\right\rangle _{X}=\left\Vert x\right\Vert
_{X}.
\]
A gauge function is a map $\mu:[0,\infty)\rightarrow\lbrack0,\infty)$ that is
continuous, strictly increasing and such that $\mu(0)=0$ and $\lim
_{t\rightarrow\infty}\mu(t)=\infty;$ the map $J_{X}:X\rightarrow X^{\ast}$
defined by%
\[
J_{X}(x)=\mu\left(  \left\Vert x\right\Vert _{X}\right)  \widetilde{J}%
_{X}(x)\text{ \ }\left(  x\in X\backslash\{0\}\right)  ,J_{X}(0)=0,
\]
is called a duality map on $X$ with gauge function $\mu.$ For all $x\in X,$%
\[
\left\langle x,J_{X}(x)\right\rangle _{X}=\left\Vert J_{X}(x)\right\Vert
_{X^{\ast}}\left\Vert x\right\Vert _{X},\text{ }\left\Vert J_{X}(x)\right\Vert
_{X^{\ast}}=\mu\left(  \left\Vert x\right\Vert _{X}\right)  .
\]
A semi-inner product is defined on $X$ by%
\[
\left(  x,h\right)  _{X}=\left\Vert x\right\Vert _{X}\left\langle
h,\widetilde{J}_{X}(x)\right\rangle _{X}\text{ \ }\left(  x\neq0\right)
,\text{ \ }\left(  0,h\right)  _{X}=0.
\]
If $X$ is a Hilbert space, $\left(  \cdot,\cdot\right)  _{X}$ is an inner
product. Proofs of these assertions and further details of Banach space
geometry may be found in \cite{EE} and \cite{LT}. From now on we suppose that
$X$ and $Y$ are equipped with duality maps corresponding to gauge functions
$\mu_{X},\mu_{Y}$ respectively, normalised so that $\mu_{X}(1)=\mu_{Y}(1)=1.$

Next we remind the reader of the notion of orthogonality introduced by James
\cite{Jam} and mention its connection with the semi-inner product above. We
say that an element $x\in X$ is $j-$orthogonal (or orthogonal in the sense of
James) to $y\in X$, and write $x\perp^{j}y,$ if%
\[
\left\Vert x\right\Vert _{X}\leq\left\Vert x+ty\right\Vert _{X}\text{ for all
}t\in\mathbb{R}.
\]
If $x$ is $j-$orthogonal to every element of a subset $W$ of $X,$ it is said
to be $j-$orthogonal to $W,$ written $x\perp^{j}W.$ A subset $W_{1}$ of $X$ is
$j-$orthogonal to $W_{2}\subset X$ (written $W_{1}\perp^{j}W_{2})$ if
$x\perp^{j}y$ for all $x\in W_{1}$ and all $y\in W_{2}.$ In general,
$j-$orthogonality is not symmetric, that is, $x\perp^{j}y$ need not imply
$y\perp^{j}x;$ indeed, symmetry would imply that $X$ is a Hilbert space, if
$\dim X\geq3$. The connection between $j-$orthogonality and the semi-inner
product is given by the following result of \cite{Jam}: if $x,h\in X,$ then
$x\perp^{j}h$\ if and only if $\left(  x,h\right)  _{X}=0.$

An important decomposition of $X$ in terms of James orthogonality was given by
Alber \cite{Alb}, who introduced the following terminology:\ given closed
subsets $M_{1},M_{2}$ of $X,$ the space $X$ is said to be the James orthogonal
direct sum of $M_{1}$ and $M_{2},$ written $X=M_{1}\uplus M_{2},$ if

1) for each $x\in X$ there is a unique decomposition $x=m_{1}+m_{2},$ where
$m_{1}\in M_{1},$ $m_{2}\in M_{2};$

2) $M_{2}\perp^{j}M_{1};$

3) $M_{1}\cap M_{2}=\emptyset.$

Alber   \cite{Alb}  established the following:

\begin{theorem}
\label{Theorem 2.1} Let $M$ be a closed subset of a uniformly convex and
uniformly smooth space $X;$ let $J_{X}$ be the duality map with gauge function
$\mu$ given by $\mu(t)=t$ for all $t\geq0.$ Then
\[
X=M\uplus J_{X}^{-1}M^{0}\text{ and }X^{\ast}=M^{0}\uplus J_{X}M.
\]

\end{theorem}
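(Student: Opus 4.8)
The plan is to prove the first identity $X=M\uplus J_X^{-1}M^0$ from scratch and then to obtain the dual identity $X^{\ast}=M^0\uplus J_XM$ by applying the first one to the dual space. I read $M$ as a closed linear subspace, which is what the annihilator notation $M^0$ and the direct-sum decomposition presuppose. Throughout I would use the James characterization recalled above in the convenient form $x\perp^{j}h\iff\langle h,J_X(x)\rangle_X=0$ (for $x\ne 0$ this follows from $(x,h)_X=\|x\|_X\langle h,\widetilde{J}_X(x)\rangle_X$ and $J_X(x)=\|x\|_X\widetilde{J}_X(x)$), together with the identity $\langle x,J_X(x)\rangle_X=\|x\|_X^2$ valid for the gauge $\mu(t)=t$.

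To verify that $X=M\uplus J_X^{-1}M^0$ I must check the three defining properties of $\uplus$, of which two are immediate. If $x\in J_X^{-1}M^0$ then $J_X(x)\in M^0$, so $\langle w,J_X(x)\rangle_X=0$ for every $w\in M$; by the characterization above this says $x\perp^{j}M$, giving property 2), namely $J_X^{-1}M^0\perp^{j}M$. For property 3), if $x\in M\cap J_X^{-1}M^0$ then $x\in M$ and $J_X(x)\in M^0$, so choosing $w=x$ yields $\|x\|_X^2=\langle x,J_X(x)\rangle_X=0$ and hence $x=0$.

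The substantive property is the existence and uniqueness of the decomposition, and this is where uniform convexity and smoothness enter. For existence I would set $m=P_Mx$, the metric projection of $x$ onto the closed subspace $M$: it exists because $X$ is reflexive and is unique because $X$ is strictly convex. Writing $n=x-m$, the minimality $\|n\|_X\le\|n-tw\|_X$ for all $w\in M$ and $t\in\mathbb{R}$ says precisely $n\perp^{j}M$, so $J_X(n)\in M^0$, i.e.\ $n\in J_X^{-1}M^0$, and $x=m+n$ is a decomposition of the required form. For uniqueness, suppose $x=m_1+n_1=m_2+n_2$ with $m_i\in M$, $n_i\in J_X^{-1}M^0$, and put $v=n_1-n_2=m_2-m_1\in M$. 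From $n_1\perp^{j}v$ and $n_2\perp^{j}v$, expanding $\langle v,J_X(n_1)\rangle_X=0$ and $\langle v,J_X(n_2)\rangle_X=0$ and using $\|J_X(n_i)\|_{X^{\ast}}=\|n_i\|_X$ gives $\|n_1\|_X^2=\langle n_2,J_X(n_1)\rangle_X\le\|n_1\|_X\|n_2\|_X$ and, symmetrically, $\|n_2\|_X^2\le\|n_1\|_X\|n_2\|_X$; hence $\|n_1\|_X=\|n_2\|_X=:r$. If $r>0$ the first estimate is an equality, so the norm-one functional $\widetilde{J}_X(n_1)$ satisfies $\langle n_2,\widetilde{J}_X(n_1)\rangle_X=\|n_2\|_X$, whence $\|n_1+n_2\|_X=\|n_1\|_X+\|n_2\|_X$ and strict convexity forces $n_1=n_2$, so $m_1=m_2$; the case $r=0$ is trivial. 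I expect this uniqueness step to be the main obstacle: because $J_X^{-1}M^0$ is not a linear subspace one cannot simply subtract the two decompositions, and strict convexity must be brought in through the semi-inner product to identify the two nearest points.

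For the dual identity I would apply the decomposition just proved to the space $X^{\ast}$, which is again uniformly convex and uniformly smooth, taking the closed subspace $N:=M^0\subset X^{\ast}$. This gives $X^{\ast}=M^0\uplus J_{X^{\ast}}^{-1}N^0$. Two standard identifications then finish the argument: since $X$ is reflexive, smooth and strictly convex, the duality maps with gauge $\mu(t)=t$ satisfy $J_{X^{\ast}}=J_X^{-1}$, so $J_{X^{\ast}}^{-1}N^0=J_XN^0$; and the bipolar theorem for the closed subspace $M$ gives $N^0=(M^0)^0=M$ under the identification $X^{\ast\ast}=X$. Substituting yields $X^{\ast}=M^0\uplus J_XM$, completing the proof.
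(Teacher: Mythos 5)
Your argument is correct and complete: the paper itself gives no proof of this theorem (it is quoted from Alber's paper \cite{Alb}), so there is nothing internal to compare against, but your route --- metric projection onto the closed subspace $M$ for existence, the James/semi-inner-product characterization $x\perp^{j}h\iff\langle h,J_X(x)\rangle_X=0$ for the orthogonality and trivial-intersection properties, strict convexity via the equality case of $\|n_1+n_2\|_X\le\|n_1\|_X+\|n_2\|_X$ for uniqueness, and the identifications $J_{X^{\ast}}=J_X^{-1}$ and $(M^0)^0=M$ for the dual statement --- is essentially Alber's own argument. Your reading of ``closed subset'' as ``closed linear subspace'' is the right one; it is forced by the paper's definition of the polar $M^0$, which is stated only for subspaces.
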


Next, we introduce $j-$eigenfunctions and $j-$eigenvalues, and summarise some
of the results obtained in the last few years concerning the representation of
compact linear operators acting between Banach spaces; for a more complete account
of this we refer to \cite{EE}. Let $T\in K(X,Y)$ and set
\[
S_{T}(x)=\frac{\left\Vert Tx\right\Vert _{Y}}{\left\Vert x\right\Vert _{X}%
}\text{ \ }\left(  x\in X\backslash\{0\}\right)  .
\]
Then for all $s,x\in X,$
\begin{equation}
\left\langle s,\operatorname{grad}S_{T}(x)\right\rangle =\frac{d}{dt}\left(
\frac{\left\Vert Tx+tTs\right\Vert _{Y}}{\left\Vert x+ts\right\Vert _{X}%
}\right)  \Bigm|_{t=0}. \label{Eq 2.1}%
\end{equation}
Suppose that $x\in S_{X}.$ Then
\begin{equation}
\operatorname{grad}S_{T}(x)=0 \label{Eq 2.2}%
\end{equation}
if and only if
\begin{equation}
\left\langle Ts,\widetilde{J}_{Y}Tx\right\rangle _{Y}=\left\Vert Tx\right\Vert
_{Y}\left\langle s,\widetilde{J}_{X}x\right\rangle _{X} \label{Eq 2.3}%
\end{equation}
for all $s\in X.$ Equation (\ref{Eq 2.3}) can also be expressed as%
\begin{equation}
T^{\ast}\widetilde{J}_{Y}Tx=\lambda\widetilde{J}_{X}x,\text{ with }%
\lambda=\left\Vert Tx\right\Vert _{Y}, \label{Eq 2.4}%
\end{equation}
or as
\begin{equation}
T^{\ast}J_{Y}Tx=\nu J_{X}x,\text{ with }\nu=\lambda\mu_{Y}\left(  \left\Vert
Tx\right\Vert _{Y}\right)  . \label{Eq 2.5}%
\end{equation}

Since $T$ is compact, there exists $x_1 \in S_X$ such that $\left\Vert T\right\Vert =\left\Vert
Tx_{1}\right\Vert _{Y}$ and
\[
T^{\ast}\widetilde{J}_{Y}Tx_{1}=\lambda_{1}\widetilde{J}_{X}x_{1},\text{
}\lambda_{1}=\left\Vert T\right\Vert ,
\]
or, equivalently,%
\[
T^{\ast}J_{Y}Tx_{1}=\nu_{1}J_{X}x_{1},\text{ with }\nu_{1}=\left\Vert
T\right\Vert \mu_{Y}\left(  \left\Vert T\right\Vert \right)  .
\]

We can see that the dual version of the above observation with
\[
S_{T*}(y^*)=\frac{\left\Vert T^*y^*\right\Vert _{X^*}}{\left\Vert y^*\right\Vert _{Y^*}%
}\text{ \ }\left(  y^*\in Y^*\backslash\{0\}\right) , 
\]
gives us that there exists an extremal element $y_1^* \in Y^*$, with $\|y^*_1\|_{Y^*}=1$, such that $\|T^*\|=\|T^*y_1^*\|_{X^*}$
and
\begin{equation} \label{dual equation}
T{J}^{-1}_{X}T^*y^*_{1}=\nu^*_{1} {J^{-1}_{Y}}y^*_{1},\text{
}\nu^*_{1}=\left\Vert T^*\right\Vert \mu^{-1}_X(\|T^*\|).
\end{equation}

And it is quite easy to see that 
\[y^*_1= J_Y T x_1 / \mu_Y (\|T\|),
\]
from which it follows that equations (\ref{dual equation}) and (\ref{Eq 2.5}) are equivalent.

Set $X_{1}=X,M_{1}=$ sp $\left\{  J_{X}x_{1}\right\}  $ (where sp denotes the
linear span)$,$ $X_{2}=$ $^{0}M_{1},N_{1}=$ sp$\left\{  J_{Y}Tx_{1}\right\}
,$ $Y_{2}=$ $^{0}N_{1}$ and $\lambda_{1}=\left\Vert T\right\Vert .$ Since
$X_{2}$ and $Y_{2}$ are closed subspaces of the reflexive spaces $X$ and $Y$
respectively (each with codimension $1$), they are reflexive. As $X_{2}^{\ast
}=\left(  ^{0}M_{1}\right)  ^{\ast}$ is isometrically isomorphic to
$X_{1}^{\ast}/M_{1},$ it follows that $X_{2}^{\ast}$ is strictly convex; the
same argument applies to $Y_{2}^{\ast}.$ Because
\[
\left\langle Tx,J_{Y}Tx_{1}\right\rangle _{Y}=\nu_{1}\left\langle x,J_{X}%
x_{1}\right\rangle _{X}\text{ for all }x\in X,
\]
we see that $T$ maps $X_{2}$ to $Y_{2}.$ The restriction $T_{2}$ of $T$ to
$X_{2}$ is thus a compact linear map of $X_{2}$ to $Y_{2},$ and if it is not
the zero operator we can repeat the above argument: there exists $x_{2}\in
X_{2},$ with $\left\Vert x_{2}\right\Vert _{X}=1,$ such that
\[
\left\langle T_{2}x,J_{Y_{2}}T_{2}x_{2}\right\rangle _{Y}=\nu_{2}\left\langle
x,J_{X_{2}}x_{2}\right\rangle _{X}\text{ for all }x\in X_{2},
\]
where $\nu_{2}=\lambda_{2}\mu_{Y}\left(  \lambda_{2}\right)  ,$ $\lambda
_{2}=\left\Vert Tx_{2}\right\Vert _{Y}=\left\Vert T_{2}\right\Vert .$
In this, $J_{X_{2}}$ and $J_{Y_{2}}$ may be
replaced by $J_{X}$ and $J_{Y}$ respectively: see \cite{EE}, p.75.

Evidently $\lambda_{2}\leq\lambda_{1}$ and $\nu_{2}\leq\nu_{1}.$ In this way
we obtain elements $x_{1},...,x_{n}$ of $X,$ each with unit norm, subspaces
$M_{1},...,M_{n}$ of $X^{\ast}$ and $N_{1},...,N_{n}$ of $Y^{\ast},$ where%
\[
M_{k}=\text{ sp }\left\{  J_{X}x_{1},...J_{X}x_{k}\right\}  ,N_{k}=\text{ sp
}\left\{  J_{Y}Tx_{1},...J_{Y}Tx_{k}\right\}  \text{ }(k=1,...,n),
\]
and decreasing families $X_{1},...,X_{n}$ and $Y_{1},...,Y_{n}$ of subspaces
of $X$ and $Y$ respectively given by%
\[
X_{k}=\text{ }^{0}M_{k-1},Y_{k}=\text{ }^{0}N_{k-1}\text{ }\left(
k=2,...,n\right)  .
\]
For each $k\in\{1,...,n\},$ $T$ maps $X_{k}$ into $Y_{k},$ $x_{k}\in X_{k}$
and, setting $T_{k}:=T\upharpoonleft_{X_{k}},\lambda_{k}(T)=\lambda
_{k}=\left\Vert T_{k}\right\Vert ,\nu_{k}=\lambda_{k}\mu(\lambda_{k}),$ we
have%
\begin{equation}
\left\langle T_kx,J_{Y}Tx_{k}\right\rangle _{Y_{k}}=\nu_{k}\left\langle
x,J_{X}x_{k}\right\rangle _{X_{k}}\text{ for all }x\in X_{k}, \label{Eq 2.6}%
\end{equation}
and so
\[
T^{\ast}_kJ_{Y_{k}}T_{k}x_{k}=\nu_{k}J_{X_{k}}x_{k}.
\]
Identifying $Y^*_k$ with $Y^*/Y^0_k$ and $X^*_k$ with $X^*/X^0_k$ we can see that (\ref{Eq 2.6}) is equivalent to
\[
\left\langle T_kx,J_{Y}T_kx_{k}\right\rangle _{Y}=\nu_{k}\left\langle x,J_{X}%
x_{k}\right\rangle _{X}\text{ for all }x\in X_{k}.
\]
Since $Tx_{k}\in Y_{k}=$ $^{0}N_{k-1},$ we have
\[
\left\langle Tx_{k},J_{Y}Tx_{l}\right\rangle _{Y}=0\text{ if }l<k.
\]
The process stops with $\lambda_{n},x_{n}$ and $X_{n+1}$ if and only if the
restriction of $T$ to $X_{n+1}$ is the zero operator while $T_{n}\neq0.$ With
respect to the semi-inner product on $X$ the $x_{n}$ have the
semi-orthogonality property
\[
\left(  x_{r},x_{s}\right)  _{X}=\delta_{r,s}\text{ if }r\leq s,
\]
where $\delta_{r,s}$ is the Kronecker delta; correspondingly, the
$y_{n}:=Tx_{n}/\left\Vert Tx_{n}\right\Vert _{Y}=Tx_{n}/\lambda_{n}$ satisfy%
\[
\left(  y_{r},y_{s}\right)  _{Y}=\delta_{r,s}\text{ if }r\leq s.
\]
This means that $x_{i}\perp^{j}X_{k}$ and $y_{i}\perp^{j}Y_{k}$ whenever
$i < k.$ We shall refer to the $\lambda_{i}$ and $x_{i}$ as $j-$eigenvalues
and $j-$eigenvectors respectively of $T.$\ (Note that if greater precision is
desirable, we write $x_{i}^{T},y_{i}^{T},\lambda_{i}^{T}$ instead of
$x_{i},y_{i},\lambda_{i}$.)

Using reflexivity of $X$ and $Y$ we can introduce, for $T^*:X^* \to Y^*$, a corresponding dual problem and define
$x_{i}^*,y_{i}^*$ and $\lambda_i^*$. Note that since $\|T_k\|=\|T^*_k\|$ we have 
\begin{equation}
\lambda_i=\lambda_i^* \mbox{ and } x^*_i=J_{X_i} x_i, \, y^*_i=J_{Y_i} y_i. \label{duality-lambda} 
\end{equation}

 If $T$ has infinite rank, then $\left\{
\lambda_{n}\right\}  $ is an infinite sequence that converges monotonically to
zero, and
\[
\cap_{n=1}^{\infty}X_{n}=\ker T=\{0\}.
\]

The natural question which arises in the context of $j-$eigenfunctions is whether a compact $T$ can be described in form of a series based on $j-$eigenfunctions. We provide more  comments about this in the next sections.

One can also ask a question about the general relations between j-eigenfunctions and s-numbers which are used in the study of compact maps. The only result known to us (see \cite{EL1}) is
\begin{lemma} \label{Lemma c_n}
Let $T\in K(X,Y)$. Then for all $k\in \N$, 
\[(2^k-1)^{-1} \lambda_k(T) \le c_k(T) \le \lambda_k(T),
\]	
where $c_k(T)$ is the $k$-th Gelfand s-number of $T$
defined by %
\[
c_{k}(T)=\inf\left\{  \left\Vert TJ_{M}^{X}\right\Vert :\text{ codim
}M<k\right\}  ,
\]
and $J_{M}^{X}$ is the canonical injection from the subspace $M$ to $X.$
\end{lemma}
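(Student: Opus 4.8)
The plan is to prove the two inequalities separately, the right-hand one being essentially immediate. For the upper bound $c_k(T)\le\lambda_k(T)$ I would exhibit an admissible competitor in the infimum defining $c_k$: the subspace $X_k={}^0 M_{k-1}$ has codimension $k-1<k$, and by construction $\|T J_{X_k}^X\|=\|T_k\|=\lambda_k(T)$. Hence the infimum defining $c_k(T)$ is at most $\lambda_k(T)$.

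For the lower bound I would argue that every competitor is large. Fix any closed subspace $M$ with $\operatorname{codim}M<k$. Since the $j$-eigenvectors $x_1,\dots,x_k$ are linearly independent (which follows from the triangular biorthogonality $\langle x_s,J_X x_r\rangle=(x_r,x_s)_X=\delta_{rs}$ for $r\le s$), the space $E=\operatorname{sp}\{x_1,\dots,x_k\}$ has dimension $k>\operatorname{codim}M$, so $E\cap M$ contains a nonzero vector $x=\sum_{i=1}^k a_i x_i$. The objective is then to bound $\|x\|_X$ from above by a multiple of $\|Tx\|_Y$, since $\|T J_M^X\|\ge\|Tx\|_Y/\|x\|_X$ would then give the claim after taking the infimum over all such $M$.

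The heart of the matter is to recover the coefficients $a_i$ from $Tx$. Writing $Tx=\sum_i a_i\lambda_i y_i$ with $y_i=Tx_i/\lambda_i$ and pairing against $J_Y y_r$, the relation $\langle y_i,J_Y y_r\rangle=(y_r,y_i)_Y=\delta_{ri}$ for $r\le i$ annihilates all terms with $i>r$ and leaves the lower-triangular system $a_r\lambda_r=\langle Tx,J_Y y_r\rangle-\sum_{i<r}(y_r,y_i)_Y\,a_i\lambda_i$. Using $|\langle Tx,J_Y y_r\rangle|\le\|Tx\|_Y$ and $|(y_r,y_i)_Y|\le 1$, an induction on $r$ gives $|a_r|\lambda_r\le 2^{r-1}\|Tx\|_Y$. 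Summing over $r$, invoking $\lambda_r\ge\lambda_k$ together with $\|x\|_X\le\sum_{r=1}^k|a_r|$, and using $\sum_{r=1}^k 2^{r-1}=2^k-1$, I obtain $\|x\|_X\le(2^k-1)\lambda_k^{-1}\|Tx\|_Y$, which is exactly the desired bound.

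I expect the main obstacle to be precisely this triangular inversion. Because James orthogonality is not symmetric, the Gram-type array $[(y_r,y_i)_Y]$ is only triangular rather than diagonal, so solving for the coefficients forces the geometric accumulation of factors of $2$ responsible for the constant $2^k-1$; the off-diagonal entries are controlled only by $1$ in modulus, and it is their cumulative effect that must be tracked. The remaining steps (the dimension count, the norm estimates, and the geometric summation) are routine, so the difficulty is concentrated in managing this growth rather than in any delicate point of Banach-space geometry.
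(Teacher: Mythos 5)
Your proposal is correct. The paper itself gives no proof of this lemma --- it is quoted from \cite{EL1} --- but your argument reconstructs what is essentially the standard one: the upper bound by exhibiting $X_k={}^0M_{k-1}$ (codimension $\le k-1$) as an admissible competitor with $\|TJ^X_{X_k}\|=\lambda_k$, and the lower bound by intersecting any $M$ of codimension $<k$ with $\mathrm{sp}\{x_1,\dots,x_k\}$ and inverting the lower-triangular system $\langle Tx,J_Yy_r\rangle=a_r\lambda_r+\sum_{i<r}a_i\lambda_i\langle y_i,J_Yy_r\rangle$, where the one-sided semi-orthogonality $(y_r,y_i)_Y=\delta_{ri}$ for $r\le i$ kills the terms with $i>r$ and the induction $|a_r|\lambda_r\le 2^{r-1}\|Tx\|_Y$ produces the factor $2^k-1$. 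The only points worth making explicit in a write-up are the trivial case $\lambda_k=0$ (where the lower bound is vacuous, so one may assume $\lambda_r>0$ for $r\le k$ before dividing) and the monotonicity $\lambda_r\ge\lambda_k$ used in the final summation; both are immediate.
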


\section{Generalization of Hilbert-Schmidt Theorem (a)}

Let us consider  the situation when a compact operator $T$ is a map from a Banach space to a Hilbert space (or in the opposite order). With the aid of $j-$eigenvalues it is possible to obtain, in this set-up,  a generalization of the Hilbert-Schmidt decomposition theorem 
which we now recall, correcting some misprints in \cite{EL}.

Corollary 2.2.20 in \cite{EE} give us the following Theorem except the last statement
about the basis, but that follows instantly due to the orthogonality of the $h_i$ and the density of the range of $T$ in $H$. 

\begin{theorem}
\label{Theorem 2.2}Let $X$ be an infinite-dimensional Banach space that is
uniformly convex and uniformly smooth, let $H$ be a Hilbert space and suppose
that $T\in K(X,H)$ has trivial kernel and range dense in $H$. Then the
sequence $\left\{  \lambda_{i}\right\}  $ of $j-$eigenvalues of $T$ decreases
to $0,$ and the action of $T$ can be described in terms of the corresponding
sequence $\left\{  x_{i}\right\}  $ of $j-$eigenvectors of $T$ and the
sequence $\left\{  h_{i}=Tx_{i}\right\}  $ by
\begin{equation}
Tx=\sum\nolimits_{i=1}^{\infty}\lambda_{i}\xi_{i}(x)h_{i}\text{ \ }\left(
x\in X\right)  , \label{Eq 2.7}%
\end{equation}
where $\xi_{i}(x)=\lambda_{i}^{-1}\left(  Tx,h_{i}\right)  _{H}.$ Also $\{h_i\}$ forms a basis of $H$.
\end{theorem}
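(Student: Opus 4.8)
The plan is to establish the series representation first and then extract the basis property as a consequence. Since $T\in K(X,H)$ and $X$ is uniformly convex and uniformly smooth, the construction recalled in Section~2 produces the sequences $\{x_i\}$, $\{\lambda_i\}$ and $\{h_i=Tx_i\}$. Because $\ker T=\{0\}$ and $T$ has infinite rank (its range is dense in the infinite-dimensional $H$), the process does not terminate, so $\{\lambda_i\}$ is an infinite sequence decreasing to $0$ with $\bigcap_{n=1}^{\infty}X_n=\ker T=\{0\}$. The key simplification here is that the target is a genuine Hilbert space, so the semi-inner product $(\cdot,\cdot)_H$ is a true inner product and the semi-orthogonality relation $(y_r,y_s)_H=\delta_{r,s}$ for $r\le s$ upgrades to full orthonormality of the $h_i/\lambda_i$; equivalently $(h_i,h_j)_H=\lambda_i^2\delta_{ij}$. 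This is the feature that makes the $\xi_i$ well defined by $\xi_i(x)=\lambda_i^{-1}(Tx,h_i)_H$ and makes the representation genuinely Parseval-like rather than merely a James-orthogonality expansion.

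Next I would prove the series~(\ref{Eq 2.7}). The natural approach is to fix $x\in X$ and estimate the tail. Write $P_n x$ for the partial sum $\sum_{i=1}^{n}\lambda_i\xi_i(x)h_i$ and consider $r_n:=Tx-P_nx$. Using the definition of $\xi_i$ and the orthogonality of the $h_i$ in $H$, one checks that $(r_n,h_i)_H=0$ for $i\le n$, so $r_n$ lies in the orthogonal complement of $\spann\{h_1,\dots,h_n\}$ in $H$; equivalently $r_n=T_{n+1}x_{(n+1)}$ where $x_{(n+1)}$ is the component of $x$ in $X_{n+1}$ under the James decomposition of Theorem~\ref{Theorem 2.1} applied at each stage. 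Then $\|r_n\|_H=\|T_{n+1}(\cdot)\|_H\le\lambda_{n+1}\|x_{(n+1)}\|_X\le\lambda_{n+1}\|x\|_X$ (the last inequality using that the successive projections are norm-bounded, as established in the iterative construction), and since $\lambda_{n+1}\to0$ the tail vanishes. This gives convergence of the series to $Tx$ for every $x\in X$. I expect this to be essentially the content of Corollary~2.2.20 of \cite{EE} as the author notes, so the real work is bookkeeping the James decomposition at each step.

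For the final assertion that $\{h_i\}$ forms a basis of $H$, the plan is to combine orthogonality with density. The $h_i$ are mutually orthogonal in $H$ (hence linearly independent), so $\{h_i/\lambda_i\}$ is an orthonormal system; it suffices to show it is complete, i.e. that its closed linear span is all of $H$. But the range of $T$ is dense in $H$ by hypothesis, and the representation~(\ref{Eq 2.7}) shows that every $Tx$ lies in $\overline{\spann}\{h_i\}$. Therefore $\overline{\spann}\{h_i\}\supseteq\overline{T(X)}=H$, so the orthonormal system $\{h_i/\lambda_i\}$ is complete and hence an orthonormal basis of $H$; rescaling, $\{h_i\}$ is an orthogonal basis. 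I do not anticipate a serious obstacle in this last step, since in a Hilbert space completeness of an orthonormal system is equivalent to its span being dense, and density is handed to us directly.

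The main obstacle, such as it is, lies in the tail estimate for convergence: one must track carefully that the residual $r_n$ after subtracting the first $n$ terms is precisely $T$ applied to the James-orthogonal complement component, and that the norm of that component is controlled by $\|x\|_X$ uniformly in $n$. The uniform convexity and uniform smoothness of $X$ are exactly what guarantee the decompositions $X=X_k\uplus J_X^{-1}M_{k-1}^{0}$ of Theorem~\ref{Theorem 2.1} exist and behave well, and the fact that the image lives in a Hilbert space is what collapses the James-orthogonality bookkeeping on the $Y$-side into ordinary Hilbert-space orthogonality, removing any asymmetry. Once those two points are in hand the proof is routine.
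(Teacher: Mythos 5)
Your treatment of the basis assertion is correct and is exactly the paper's argument: the paper derives that part ``instantly'' from the orthogonality of the $h_{i}$ together with the density of the range of $T$ and the already-established expansion, just as you do. The genuine problem lies in your proof of the series representation (\ref{Eq 2.7}) itself, which the paper does not prove but imports wholesale from Corollary 2.2.20 of \cite{EE}. Your tail estimate hinges on the inequality $\left\Vert x_{(n+1)}\right\Vert _{X}\leq\left\Vert x\right\Vert _{X}$, which you justify by saying that the successive projections are norm-bounded ``as established in the iterative construction.'' No such bound is established there, and it is false in general. At each step the decomposition $X_{k}=\spann\{x_{k}\}\uplus X_{k+1}$ gives $x_{k}\perp^{j}X_{k+1}$, which controls the coefficient of $x_{k}$ in $u=cx_{k}+u'$ by $|c|\leq\left\Vert u\right\Vert _{X}$ and hence bounds the projection of $X_{k}$ onto $X_{k+1}$ only by $2$; after $n$ steps the component in $X_{n+1}$ is controlled only by a constant of order $2^{n}$. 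This exponential loss is not an artifact: it is precisely the source of the factor $(2^{k}-1)^{-1}$ in Lemma \ref{Lemma c_n} and the reason Theorem \ref{Th H-S th with lambda} must assume the decay $\lambda_{n}\leq2^{-n+1}$ when neither space is a Hilbert space. With only $\left\Vert x_{(n+1)}\right\Vert _{X}\leq2^{n}\left\Vert x\right\Vert _{X}$ your tail bound becomes $\lambda_{n+1}2^{n}\left\Vert x\right\Vert _{X}$, which need not tend to zero.

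The Hilbert structure of the target must therefore be exploited on the $H$-side rather than the $X$-side: the partial sum is the orthogonal projection of $Tx$ onto $\spann\{h_{1},\dots,h_{n}\}$, so the remainders are uniformly bounded by $\left\Vert Tx\right\Vert _{H}$, and their convergence to zero requires a further argument --- for instance combining the strong convergence of these norm-one projections with the compactness of $T$ and the density of the range, which is essentially what the cited result in \cite{EE} does. As written, your sketch establishes convergence only under a decay hypothesis on the $\lambda_{n}$ that is not assumed here. A smaller point: you assert both $(h_{i},h_{j})_{H}=\lambda_{i}^{2}\delta_{ij}$ (taking $h_{i}=Tx_{i}$ literally) and $(r_{n},h_{i})_{H}=0$ for $i\leq n$; the latter computation only works if the $h_{i}$ are normalised, i.e.\ $h_{i}=Tx_{i}/\lambda_{i}$, which is evidently the intended reading of the statement (compare Theorem \ref{Theorem 2.3} and Lemma \ref{Lemma 3.8}), so you should fix one convention throughout.
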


\begin{theorem}
\label{Theorem 2.3}Let $Y$ be an infinite-dimensional Banach space that is
uniformly convex and uniformly smooth, let $H$ be a Hilbert space and suppose
that $T\in K(H,Y)$ has trivial kernel and range dense in $Y$. Then the sequence $\left\{  \lambda
_{i}\right\}  $ of $j-$eigenvalues of $T$ decreases to $0,$ and the action of
$T$ can be described in terms of the corresponding sequence $\left\{
h_{i}\right\}  $ of $j-$eigenvectors\ of $T$ and the sequence $\left\{
y_{i}=Th_{i}/\left\Vert Th_{i}\right\Vert _{Y}\right\}  $ by
\begin{equation} 
Th=\sum\nolimits_{i=1}^{\infty}\lambda_{i}\left(  h,h_{i}\right)  _{H}%
y_{i}\text{ \ }\left(  h\in H\right)  .\label{Eq 2.8}%
\end{equation}
Here $\{h_i\}$ is a basis of $H$ and $\{y_i\}$ is a basis of $Y$. 
\end{theorem}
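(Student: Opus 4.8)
The plan is to prove Theorem \ref{Theorem 2.3} by exploiting the duality relations \eqref{duality-lambda} to transfer the already-established Theorem \ref{Theorem 2.2} to the adjoint $T^{\ast}$, and then to read off the representation for $T$ itself. First I would observe that since $T\in K(H,Y)$ has trivial kernel and dense range, its adjoint $T^{\ast}:Y^{\ast}\rightarrow H^{\ast}$ is again compact; because $H$ is a Hilbert space we may identify $H^{\ast}$ with $H$, so $T^{\ast}$ maps the uniformly convex, uniformly smooth space $Y^{\ast}$ into the Hilbert space $H$. Moreover $\ker T^{\ast}=(\mathrm{range}\,T)^{0}=\{0\}$ since $T$ has dense range, and $\mathrm{range}\,T^{\ast}$ is dense in $H^{\ast}=H$ since $\ker T=\{0\}$. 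Thus $T^{\ast}$ satisfies exactly the hypotheses of Theorem \ref{Theorem 2.2}.

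Applying Theorem \ref{Theorem 2.2} to $T^{\ast}$ yields a representation $T^{\ast}y^{\ast}=\sum_{i}\lambda_{i}^{\ast}\xi_{i}(y^{\ast})\,h_{i}^{\ast}$ with $h_{i}^{\ast}=T^{\ast}y_{i}^{\ast}$, and the conclusion that $\{h_{i}^{\ast}\}$ forms a basis of $H$. By the duality identities \eqref{duality-lambda} we have $\lambda_{i}^{\ast}=\lambda_{i}$, and the dual $j$-eigenvectors are precisely $x_{i}^{\ast}=J_{H}h_{i}$ and $y_{i}^{\ast}=J_{Y_{i}}y_{i}$; since $J_{H}$ is (via the Hilbert identification) the identity, the basis $\{h_{i}^{\ast}\}$ of $H$ produced for $T^{\ast}$ is identified with the sequence $\{h_{i}\}$ of $j$-eigenvectors of $T$, giving that $\{h_{i}\}$ is a basis of $H$. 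I would then take the representation for $T^{\ast}$ and dualise it: computing $\langle Th,y^{\ast}\rangle_{Y}=\langle h,T^{\ast}y^{\ast}\rangle_{H}$ and inserting the series for $T^{\ast}$, the Hilbert inner products rearrange into $\sum_{i}\lambda_{i}(h,h_{i})_{H}\langle y_{i},y^{\ast}\rangle_{Y}$, which is exactly $\langle \sum_{i}\lambda_{i}(h,h_{i})_{H}y_{i},\,y^{\ast}\rangle_{Y}$; since $Y$ is reflexive this proves \eqref{Eq 2.8}. Monotone decrease of $\{\lambda_{i}\}$ to $0$ is immediate from the construction of the $j$-eigenvalues in Section 2 together with the infinite rank of $T$.

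The remaining claim is that $\{y_{i}\}$ is a basis of $Y$, and this I expect to be the main obstacle. The density of $\mathrm{span}\{y_{i}\}$ in $Y$ follows from \eqref{Eq 2.8} together with the density of $\mathrm{range}\,T$ in $Y$ and the fact that $\{h_{i}\}$ spans a dense subspace of $H$. The genuinely delicate point is establishing that $\{y_{i}\}$ is actually a Schauder basis rather than merely a complete, minimal, semi-orthogonal system: one must show that the partial-sum projections $P_{n}y=\sum_{i\le n}\langle y,y_{i}^{\ast}\rangle_{Y}\,y_{i}$ are uniformly bounded. Here I would lean on the semi-orthogonality $(y_{r},y_{s})_{Y}=\delta_{r,s}$ for $r\le s$ recorded in Section 2, which makes the $y_{i}$ a James-orthogonal (hence minimal) system with biorthogonal functionals $y_{i}^{\ast}=J_{Y_{i}}y_{i}$; combined with the uniform convexity and uniform smoothness of $Y$, this should furnish the requisite uniform bound on $\|P_{n}\|$ and thus upgrade completeness to the Schauder basis property. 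This is precisely the step where the geometric hypotheses on $Y$ are indispensable, and it is where the argument will require the most care.
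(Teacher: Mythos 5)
Your derivation of (\ref{Eq 2.8}) and of the basis property of $\{h_i\}$ in $H$ is essentially sound, though it takes a genuinely different route from the paper: you dualise and invoke Theorem \ref{Theorem 2.2} for $T^{\ast}:Y^{\ast}\rightarrow H$, using (\ref{duality-lambda}) to identify the resulting system with $\{h_i\}$, whereas the paper works directly in $H$, observing that the projections $P_n:H_{k-1}\rightarrow H_k$ arising from the construction of Section 2 are genuine orthogonal projections of norm $1$, so that $\{h_i\}$ is an orthonormal basis of $H$ (via Lemma 2.2.10 and Remark 2.2.33 of \cite{EE}), after which (\ref{Eq 2.8}) follows by applying the continuous map $T$ to $h=\sum_i(h,h_i)_H h_i$. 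One repair to your version: the step ``since $Y$ is reflexive this proves (\ref{Eq 2.8})'' only yields weak convergence of the partial sums to $Th$; reflexivity does not upgrade weak to norm convergence. The fix is immediate --- once $\{h_i\}$ is an orthonormal basis of $H$, apply $T$ to the norm-convergent expansion of $h$ --- but as written the step is not valid.

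The genuine gap is in the last claim, that $\{y_i\}$ is a basis of $Y$. You correctly isolate the difficulty (uniform boundedness of the partial-sum projections), but the mechanism you propose --- the semi-orthogonality $(y_r,y_s)_Y=\delta_{r,s}$ for $r\le s$ together with uniform convexity and uniform smoothness of $Y$ --- cannot by itself deliver the uniform bound. Those two ingredients are available for an arbitrary compact $T\in K(X,Y)$ between uniformly convex and uniformly smooth spaces with trivial kernel and dense range, and for such maps the paper has to impose the strong decay condition $\lambda_n\le 2^{-n+1}$ of Theorem \ref{Th H-S th with lambda} precisely because the individual projections $Q_n$ along $y_n$ onto $Y_{n+1}$ are in general only bounded by $2$, so that their composites may grow like $2^n$; if your proposed mechanism worked, that hypothesis would be superfluous. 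What makes the present case special is the Hilbert structure of the domain, and your argument never uses it at this step. The paper's proof transfers the norm-one orthogonal projections of $H$ through $T$: it shows $\sup\{\|y\|_Y:y\in Q_nQ_{n-1}\cdots Q_2(T(B_H))\}\le\|T\|$, deduces $\|y-R_ny\|_Y\le\|T\|$ for all $y\in T(B_H)$ from Lemma 2.2.21 of \cite{EE}, and only then passes to the basis property on all of $Y$ by the argument of Theorem 2.2.39 of \cite{EE}. You need to incorporate this transfer of the $H$-orthogonal projections through $T$ (or some equivalent device exploiting the Hilbert domain); without it the final claim is unproved.
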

\begin{proof}
Combining the obvious estimate 
\begin{equation}
\|P_n\|_{H_{k-1} \to H_k}\le 1 \label{Pn}
\end{equation}  with Lemma 2.2.10 in \cite{EE} guarantees that $\|S_n\| \le 1$ and then from  Remark 2.2.33 in \cite{EE}  one can see that $\{h_i \}$ is a basis of $H$ and from this we obtain (\ref{Eq 2.8}). Now using (\ref{Pn}) and the relation between $H_n$ and $Y_n$ we can obtain that
\[\sup\{\|y\|_Y: y\in Q_n Q_{n-1} ... Q_2(T(B_H))\} \le \|T\|.
\]
 Use of Lemma 2.2.21 in \cite{EE} shows  that $\|y-R_ny\| \le \|T\|$ for all $y\in T(B_H)$. 
 Following arguments from the proof of Theorem 2.2.39 in \cite{EE} we see that $\{y_i\}$ is a basis of $Y$.
\end{proof}

One obvious question which arises is whether is it possible, in the case $T\in K(X,H)$,  to replace $\xi_i$ in (\ref{Eq 2.7}) by a linear function, the construction of which does not  directly involve $T$. With the help of the above theorem we answer, positively, the above question in the next statement which can be seen as an improvement of Theorem \ref{Theorem 2.2}.

\begin{theorem}
	\label{Theorem 2.4}Let $X,H$ and $T\in K(X,H)$ be as in Theorem
	\ref{Theorem 2.2}. Denote by $\{x_i^T\}$ and $\{\lambda_i^T\}$ the $j$-eigenfunctions and $j$-eigenvalues respectively of $T$; and $h_i^T=T(x_i^T)/ \lambda_i^T \in H$. Moreover, $\{h_i^{T^*}\}$ and  $\{\lambda_i^{T^*}\}$ are the $j$-eigenfunctions and $j$-eigenvalues respectively of $T^*$, and 
 $x_i^{T^*}=T^*(h_i^{T^*})/ \lambda_i^{T^*}$. Then $h_i^T=h_i^{T^*}$ and for each $x\in X,$ we have\
	\begin{align}
	Tx & =\sum\nolimits_{i=1}^{\infty}\lambda_{i}^{T^{\ast}}\left\langle
	x,x_{i}^{T^{\ast}}\right\rangle _{X}h_{i}^{T^{\ast}} \\
	& =\sum\nolimits_{i=1}%
	^{\infty}\lambda_{i}^{T^{\ast}}\left\langle x,J_{X}z_{i}\right\rangle
	_{X}h_{i}^{T^{\ast}} \\
		& =\sum\nolimits_{i=1}^{\infty}\lambda_{i}^T 
	\left\langle x,J_{X_i}( x_{i}^{T})\right\rangle _{X_i}h_{i}^{T}
		\end{align}
	
	where $z_{i}\in X$ and $J_{X}z_{i}=x_{i}^{T^{\ast}}.$ Also $\{h_i^T\}$ is a basis of $H$.
\end{theorem}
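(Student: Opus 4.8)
The plan is to build on Theorem~\ref{Theorem 2.2} and the duality relations \eqref{duality-lambda} already established for the $j$-eigensystem. The core of the argument is the identification $h_i^T = h_i^{T^*}$, after which the three displayed formulas are essentially three ways of writing the same functional $\xi_i$. First I would apply Theorem~\ref{Theorem 2.2} to $T\in K(X,H)$ to obtain the representation $Tx = \sum_i \lambda_i^T \xi_i(x) h_i^T$ with $\xi_i(x) = (\lambda_i^T)^{-1}(Tx, h_i^T)_H$, noting that $\{h_i^T\}$ is a basis of $H$ and the $h_i^T$ are orthonormal in the Hilbert space $H$ (this orthonormality is what makes the Hilbert-target case special). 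The goal is to replace the nonlinear coefficient $\xi_i$, which involves $T$ through $(Tx, h_i^T)_H$, by the genuinely linear functional $x\mapsto \langle x, x_i^{T^*}\rangle_X$.

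The key step is to show $h_i^T = h_i^{T^*}$. Here I would invoke \eqref{duality-lambda}, which gives $\lambda_i = \lambda_i^*$ and, crucially, $x_i^* = J_{X_i} x_i$ and $y_i^* = J_{Y_i} y_i$ for the general $j$-eigensystem. Specializing to the present situation where the target is the Hilbert space $H$: the $j$-eigenvectors of $T^*\colon H^* \to X^*$ live in $H^* \cong H$, and the self-duality of $H$ (so that $J_H$ is the identity after the canonical identification) forces the $j$-eigenfunctions $h_i^{T^*}$ of $T^*$ to coincide with the orthonormal vectors $h_i^T = Tx_i^T/\lambda_i^T$. Concretely, I would check that $h_i^{T^*} = J_H^{-1} y_i^{T,*} = J_H^{-1} J_H h_i^T = h_i^T$ using the second identity in \eqref{duality-lambda} together with the fact that on a Hilbert space the duality map with gauge $\mu(t)=t$ is the Riesz identification. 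Once $h_i^T = h_i^{T^*}$ is in hand, the inner product $(Tx, h_i^T)_H$ can be rewritten via the adjoint: $(Tx, h_i^{T^*})_H = \langle x, T^* h_i^{T^*}\rangle_X$, and since $T^* h_i^{T^*} = \lambda_i^{T^*} x_i^{T^*}$ by the defining relation for the $j$-eigenvectors of $T^*$, we get $\xi_i(x) = (\lambda_i^T)^{-1}\langle x, \lambda_i^{T^*} x_i^{T^*}\rangle_X = \langle x, x_i^{T^*}\rangle_X$, which yields the first displayed line.

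For the second and third lines I would simply substitute the two descriptions of $x_i^{T^*}$. The second line follows by writing $x_i^{T^*} = J_X z_i$ for the element $z_i\in X$ guaranteed by reflexivity (since $J_X$ is a bijection onto $X^*$ under the stated hypotheses), giving $\langle x, x_i^{T^*}\rangle_X = \langle x, J_X z_i\rangle_X$. The third line comes from the intrinsic description $x_i^{T^*} = J_{X_i}(x_i^T)$ read off from \eqref{duality-lambda}, where $X_i$ is the subspace on which the $i$-th step of the construction takes place and $J_{X_i}$ is the associated duality map; I would note that under the identification of $X_i^*$ with $X^*/X_i^0$ used earlier in the excerpt, $\langle x, J_{X_i}(x_i^T)\rangle_{X_i}$ and $\langle x, x_i^{T^*}\rangle_X$ agree for $x\in X_i$, and that the representation is consistent across all of $X$ by the telescoping nature of the construction. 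Finally, the basis claim for $\{h_i^T\}$ is immediate from Theorem~\ref{Theorem 2.2}, which already asserts it.

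The main obstacle I anticipate is the bookkeeping around the duality map $J_{X_i}$ on the nested subspaces $X_i$ versus the global duality map $J_X$, and making precise the identification $h_i^{T^*} = h_i^T$ rather than merely $h_i^{T^*} = c_i h_i^T$ for some scalar. The normalization conventions ($\mu_X(1)=\mu_Y(1)=1$, unit-norm $x_i$, and $\|h_i^{T^*}\|_H = 1$) should pin down the constants, but I would need to track them carefully; the self-duality of $H$ is doing the real work in collapsing $J_H$ to the identity, and everything else is a verification that the general duality identities in \eqref{duality-lambda} specialize correctly when one factor is Hilbertian.
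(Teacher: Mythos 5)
Your proposal is correct, and it reaches the same three displayed identities, but it enters the argument from the opposite side of the duality. The paper's proof applies Theorem \ref{Theorem 2.3} to $T^{\ast}\in K(H,X^{\ast})$ to obtain the expansion $T^{\ast}h=\sum_{i}\lambda_{i}^{T^{\ast}}(h,h_{i}^{T^{\ast}})_{H}\,x_{i}^{T^{\ast}}$ and then transposes it through $(Tx,h)_{H}=\langle x,T^{\ast}h\rangle_{X}$ to land directly on the first display; the identities $h_{i}^{T}=h_{i}^{T^{\ast}}$ and $x_{i}^{T^{\ast}}=J_{X_{i}}(x_{i}^{T})$ from (\ref{duality-lambda}) are only invoked at the end, to pass to the third display. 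You instead start from Theorem \ref{Theorem 2.2} applied to $T$ itself, i.e.\ from the orthonormal expansion $Tx=\sum_{i}(Tx,h_{i}^{T})_{H}h_{i}^{T}$ in $H$, establish $h_{i}^{T}=h_{i}^{T^{\ast}}$ \emph{first} (via the Hilbert-space collapse of $J_{H_i}$ in (\ref{duality-lambda})), and then convert each coefficient by the single pointwise relation $(Tx,h_{i}^{T^{\ast}})_{H}=\langle x,T^{\ast}h_{i}^{T^{\ast}}\rangle_{X}=\lambda_{i}^{T^{\ast}}\langle x,x_{i}^{T^{\ast}}\rangle_{X}$, which is just the definition of $x_{i}^{T^{\ast}}$ given in the statement. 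Your route is somewhat more economical: it needs only the basis/orthonormality conclusion of Theorem \ref{Theorem 2.2} rather than the full series representation of $T^{\ast}$ from Theorem \ref{Theorem 2.3}, and it yields the basis claim for $\{h_{i}^{T}\}$ without further argument; what it gives up is the by-product of the paper's route, namely the expansion of $T^{\ast}h$ over the system $\{x_{i}^{T^{\ast}}\}$ in $X^{\ast}$. In both arguments the genuinely load-bearing and least detailed step is the same, namely the identification of the dual $j$-eigensystem via (\ref{duality-lambda}), and your treatment of it (including the $J_{X_i}$ versus $J_X$ bookkeeping and the normalisation that pins down $h_{i}^{T^{\ast}}=h_{i}^{T}$ exactly rather than up to a scalar) is at least as careful as the paper's.
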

\begin{proof}
By Theorem \ref{Theorem 2.3},
for all $h\in H$ we have, with the obvious notation,
\[
T^{\ast}h=\sum\nolimits_{i=1}^{\infty}\lambda_{i}^{T^{\ast}}\left(
h,h_{i}^{T^{\ast}}\right)  _{H}x_{i}^{T^{\ast}}\text{.}%
\]
Using the identity
\[
\left(  Tx,h\right)  _{H}=\left\langle x,T^{\ast}h\right\rangle _{X}=\left(
T^{\ast\ast}x,h\right)  _{H}%
\]	
we obtain
\[Tx  =\sum\nolimits_{i=1}^{\infty}\lambda_{i}^{T^{\ast}}\left\langle
x,x_{i}^{T^{\ast}}\right\rangle _{X}h_{i}^{T^{\ast}}. \]
By setting $z_{i}:= J^{-1}_{X} x_{i}^{T^{\ast}} \in X$ we instantly get
\[Tx=  \sum\nolimits_{i=1}%
^{\infty}\lambda_{i}^{T^{\ast}}\left\langle x,J_{X}z_{i}\right\rangle
_{X}h_{i}^{T^{\ast}}. \]

From the construction of $x_i^T,$ $x_i^{T^*},$ $h_i^T,$ $h_i^{T^*}$  and (\ref{duality-lambda}) we have $J_{X_i} (x_i^T) = x_i^{T^*}$, $h_i^T=h_i^{T^*}$ and $\lambda_1^T=\lambda_1^{T^*}$ which gives us:

\[Tx= \sum\nolimits_{i=1}^{\infty}\lambda_{i}^T 
\left\langle x,J_{X_i}( x_{i}^{T})\right\rangle _{X_i}h_{i}^{T},
\]
and note that from  Theorem \ref{Theorem 2.3} follows that $\{h_i^T\}$ is a basis of $H$.
\end{proof}

\begin{remark}
\label{Remark 2.5}
Note that if $x\perp^{j}y$ in $X,$ then $J_{X}(y)$ $\perp^{j}J_{X}(x)$ in
$X^{\ast}.$ 
\end{remark}
This remark follows from the simple observation that if $x\perp^{j}y,$ then $\left\langle y,J_{X}(x)\right\rangle
_{X}=0$ and
\[
0=\left\langle y,J_{X}(x)\right\rangle _{X}=\left\langle J_{X^{\ast}}\left(
J_{X}(y)\right)  ,J_{X}(x)\right\rangle _{X},
\]
which implies that $J_{X}(y)$ $\perp^{j}J_{X}(x)$ in $X^{\ast}.$

The behaviour of the $z_{i}$ is different from that of the $x_{i}^{T^{\ast}},$
for if $i<k$ then $x_{i}^{T^{\ast}}\perp^{j}x_{k}^{T^{\ast}}$ while
$z_{k}\perp^{j}z_{i}.$ While $x_{1}=z_{1}$ the
direct relationship between $x_{i}^{T^{\ast}}$ and $z_{i}$ 
when $i>1$ is not clear to us, except the obvious relation $z_i=J_X^{-1}x_i^{T^*}= J_X^{-1} J_{X_i}(x_i^T)$.

We conclude this section with some observations on $j-$eigenvalues.
We will establish a connection between the  j-eigenvalues and the "standard" eigenvalues which resembles results  of K\"onig for eigenvalues.

\begin{theorem}
	Let $X$ be uniformly convex and uniformly smooth and let $T:X \to X$ be compact, with trivial kernel and dense range. Let the eigenvalues of $T$, arranged by decreasing modulus, be denoted by $\hat{\lambda}_n(T)$, and let ${\lambda}_n(T)$ be j-eigenvalues  produced by the above scheme. Then for each $n \in \N$,
	\[| \hat{\lambda}_n(T)| = \lim_{k\to \infty} \lambda_n(T^k)^{1/k}.
	\]  
\end{theorem}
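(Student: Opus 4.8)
The plan is to relate the $j$-eigenvalues $\lambda_n(T^k)$ to the Gelfand numbers $c_n(T^k)$ using Lemma \ref{Lemma c_n}, and then to invoke the classical spectral-radius-type formula for Gelfand (or approximation) numbers of iterates. The key inequality in Lemma \ref{Lemma c_n} states that $(2^n-1)^{-1}\lambda_n(T^k)\le c_n(T^k)\le \lambda_n(T^k)$ for all $k$. Taking $k$-th roots, the factor $(2^n-1)^{-1/k}$ tends to $1$ as $k\to\infty$ for each fixed $n$, so that $\lim_{k\to\infty}\lambda_n(T^k)^{1/k}$ exists and equals $\lim_{k\to\infty}c_n(T^k)^{1/k}$ whenever the latter exists. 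Thus the entire problem is transferred to a statement purely about $s$-numbers of iterates of $T$, where classical machinery applies.

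Next I would recall the known result, due essentially to K\"onig (and reproduced in the theory of eigenvalues via $s$-numbers), that for a compact operator $T$ on a Banach space and any $s$-number sequence $(s_n)$, one has the limit formula $\lim_{k\to\infty}s_n(T^k)^{1/k}=|\hat\lambda_n(T)|$, where $\hat\lambda_n(T)$ is the $n$-th eigenvalue of $T$ arranged by decreasing modulus and repeated according to algebraic multiplicity. This is a consequence of the Weyl-type inequalities relating products of $s$-numbers to products of eigenvalue moduli, combined with the submultiplicativity $s_{m+n-1}(AB)\le s_m(A)s_n(B)$ and the fact that the spectral radius of $T$ restricted to appropriate finite-codimensional invariant structures is governed by the eigenvalues. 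Since $c_n$ is an $s$-number sequence, this applies directly to give $\lim_{k\to\infty}c_n(T^k)^{1/k}=|\hat\lambda_n(T)|$.

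Combining the two displayed limits yields $\lim_{k\to\infty}\lambda_n(T^k)^{1/k}=\lim_{k\to\infty}c_n(T^k)^{1/k}=|\hat\lambda_n(T)|$, which is exactly the assertion. I would also note that the hypotheses of uniform convexity, uniform smoothness, triviality of kernel, and density of range are what guarantee that the $j$-eigenvalue scheme produces a well-defined infinite sequence $\lambda_n(T^k)$ for every $n$ and every iterate $T^k$, so that the left-hand side is meaningful; one should check that $T^k$ inherits the relevant properties (compactness is clear, and trivial kernel plus dense range of $T$ on a reflexive space should pass to the iterates, though this needs a brief verification).

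The main obstacle I anticipate is the second step, namely quoting or establishing the limit formula $\lim_{k\to\infty}c_n(T^k)^{1/k}=|\hat\lambda_n(T)|$ for Gelfand numbers. If this is available off the shelf from the $s$-number/eigenvalue literature (K\"onig's book), the proof is essentially immediate after the reduction via Lemma \ref{Lemma c_n}. If it must be proved, the delicate point is the lower bound: showing that the $n$-th eigenvalue modulus is not strictly exceeded in the limit, which typically requires the Weyl inequality $\prod_{j=1}^{n}|\hat\lambda_j(T^k)|\le C\prod_{j=1}^{n}c_j(T^k)$ together with a careful separation of the first $n$ eigenvalues from the rest under iteration. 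The interchange of the limit with the fixed index $n$, and ensuring the $C=(2^n-1)$ factor washes out under the $k$-th root, is routine once the $s$-number formula is in hand.
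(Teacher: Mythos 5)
Your proposal is correct and follows essentially the same route as the paper: reduce to Gelfand numbers via Lemma \ref{Lemma c_n} (the constant $(2^n-1)^{-1/k}\to 1$), then quote K\"onig's formula $|\hat\lambda_n(T)|=\lim_{k\to\infty}c_n(T^k)^{1/k}$, which the paper cites directly from \cite[Proposition 2.d.6]{Ko2}, so no Weyl-inequality argument is needed. The only piece you flag but do not carry out --- that $T^k$ inherits trivial kernel and dense range so the $j$-eigenvalue scheme applies to each iterate --- is exactly the short density-plus-induction verification the paper supplies.
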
 
\begin{proof}
	Plainly $\ker (T^k)=\{0\}$ for all $k\in \mathbb{N}$. Moreover the range of $T^k$ is dense in $X$ for each $k\in \mathbb{N}$. 
	
	To verify this, let $y\in X$. Then there is a sequence $\{x_n\}$ in $X$ such that $Tx_n \to y$, and there is a sequence $\{y_n\}$ in $X$ such that $\|Ty_n-x_n\| \le 1/n$ for each $n\in \mathbb{N}$. Thus
	\[\|T^2y_n-y\|=\|T(Ty_n-x_n)+ Tx_n-y\| \le \|T\| \|Ty_n-x_n\| +\|Tx_n-y\| \to 0
	\]
	as $n \to \infty$. Hence the range of $T^2$ is dense in $X$. The claim now follows by induction.
	
	Since $T^k$ has the same properties as $T$, we know (see Lemma \ref{Lemma c_n}) that for all $n \in \N,$
	\[ (2^n-1)^{-1} \lambda_n(T^k)\le c_n(T^k) \le \lambda_n(T^k).
	\] 
	A result of K\"onig (see \cite[Proposition 2.d.6, p. 134]{Ko2}  and \cite{Ko1}) gives
	\[|\hat{\lambda}_n(T)| = \lim_{k\to \infty} c_n(T^k)^{1/k}\]
	and so
	\[|\hat{\lambda}_n(T)| = \lim_{k\to \infty} \lambda_n(T^k)^{1/k}.\]
\end{proof}

To conclude this section we consider a map $T\in K(H,Y),$ where $H$ is a
Hilbert space and, as before, $Y$ is infinite-dimensional, uniformly convex
and uniformly smooth; suppose that $\ker T=\{0\}$ and the range of $T$ is dense in $Y$. Given any $n\in\mathbb{N},$
the $n^{th}$ approximation number of $T$ is defined to be
\[
a_{n}(T)=\inf\left\Vert T-F\right\Vert ,
\]
where the infimum is taken over all $F\in B(H,Y)$ with rank $F<n.$ It is well
known (see \cite{Pie2}, Proposition 2.4.6) that for such a map $a_{n}(T)$
coincides with the $n^{th}$ Gelfand number $c_{n}(T).$

Moreover (see \cite{Pie2}, Lemma 2.7.1), the following upper estimates for
approximation numbers hold.

\begin{lemma}
	\label{Lemma 3.7}Let $Y$ be a Banach space, let $S\in B(H,Y)$ and suppose that
	$a_{n}(S)>0$ for some $n\in\mathbb{N}.$ Then given any $\varepsilon>0$ there
	exists an orthonormal family $\left(  x_{1},...,x_{n}\right)  $\ in $H$ such
	that
	\[
	a_{k}(S)\leq(1+\varepsilon)\left\Vert Sx_{k}\right\Vert _{Y}\text{ for
	}k=1,...,n.
	\]
	
\end{lemma}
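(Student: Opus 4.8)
The plan is to reduce the statement to the geometrically transparent characterization of the Gelfand numbers, using the identity $a_k(S)=c_k(S)$ recalled just above (valid because the domain is a Hilbert space; see \cite{Pie2}, Proposition 2.4.6). This converts the analytically awkward approximation number into an infimum of restriction norms over subspaces of prescribed codimension. Since $H$ is a Hilbert space, every subspace of finite codimension is the orthogonal complement of a finite-dimensional subspace, so for each $k$ one has
\[
a_k(S)=c_k(S)=\inf\{\|SJ_M^H\|:\operatorname{codim}M<k\}=\inf_{\dim N<k}\ \sup_{x\perp N,\ \|x\|_H=1}\|Sx\|_Y,
\]
where $J_M^H$ denotes the canonical injection of $M$ into $H$. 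The idea is then to build the required orthonormal family one vector at a time, choosing each new vector in the orthogonal complement of those already selected, so that mutual orthogonality is automatic, while the displayed formula controls the size of $\|Sx_k\|_Y$.

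First I would observe that the hypothesis $a_n(S)>0$ together with the monotonicity $a_1(S)\ge\cdots\ge a_n(S)$ forces $a_k(S)>0$ for every $k\le n$. For the inductive step, suppose orthonormal vectors $x_1,\dots,x_{j-1}$ have been chosen with $a_i(S)\le(1+\varepsilon)\|Sx_i\|_Y$ for $i<j$. Put $P=\operatorname{span}\{x_1,\dots,x_{j-1}\}^{\perp}$, a subspace of $H$ of codimension $j-1$. Since $P$ is admissible in the infimum defining $c_j(S)=a_j(S)$, we obtain $\|SJ_P^H\|\ge a_j(S)$, that is, $\sup_{x\in P,\ \|x\|_H=1}\|Sx\|_Y\ge a_j(S)$. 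As this supremum is strictly positive, one may choose a unit vector $x_j\in P$ with $\|Sx_j\|_Y>a_j(S)/(1+\varepsilon)$, equivalently $a_j(S)\le(1+\varepsilon)\|Sx_j\|_Y$. By construction $x_j$ is orthogonal to $x_1,\dots,x_{j-1}$, so the orthonormal family grows by one; after $n$ steps the asserted family is obtained.

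There is no serious obstacle here: the argument is a clean recursive selection, and the only genuine input is the first reduction $a_k(S)=c_k(S)$, which is exactly where the Hilbert-space structure of the domain is used. Beyond that, the point deserving mild care is the handling of $\varepsilon$: since each $x_k$ is required to control only its own index $k$, a single fixed $\varepsilon$ suffices at every stage, and the strict inequality $\|Sx_k\|_Y>a_k(S)/(1+\varepsilon)$ is always achievable because $a_k(S)/(1+\varepsilon)$ lies strictly below the supremum $\|SJ_P^H\|\ge a_k(S)$. Thus the recursion never stalls, and the positivity $a_k(S)>0$ for $k\le n$ is precisely what guarantees a nonzero admissible choice at each step.
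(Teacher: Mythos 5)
Your argument is correct. Note that the paper itself gives no proof of this lemma---it is quoted verbatim from Pietsch (\cite{Pie2}, Lemma 2.7.1)---so there is nothing in-text to compare against; your recursive selection of unit vectors in $\mathrm{span}\{x_1,\dots,x_{j-1}\}^{\perp}$, justified via the identity $a_k(S)=c_k(S)$ for operators with Hilbert-space domain, is a clean and complete substitute. It is essentially equivalent to the standard proof, which avoids Gelfand numbers by observing directly that $SP_{j-1}$ has rank $<j$ (where $P_{j-1}$ is the orthogonal projection onto the span of the chosen vectors), so that $a_j(S)\leq\|S(I-P_{j-1})\|$; your route through $c_k$ simply packages that same projection argument inside the citation of Proposition 2.4.6.
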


For our map $T$ an improvement of this result is possible as a simple consequence of Theorem {\ref{Theorem 2.3}}.

\begin{lemma}
	\label{Lemma 3.8}Let $\left\{  h_{i}\right\}  $ and $\left\{  \lambda
	_{i}\right\}  $ be the sequences of $j-$eigenfunctions and \newline%
	$j-$eigenvalues respectively of $T$ (see Theorem \ref{Theorem 2.3}). Then the
	orthonormal sequence $\left\{  h_{i}\right\}  $ has the property that
	\[
	a_{i}(T)\leq\left\Vert Th_{i}\right\Vert _{Y}=\lambda_{i}\text{ for all }%
	i\in\mathbb{N}.
	\]
	
\end{lemma}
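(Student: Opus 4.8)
The plan is to exhibit, for each $i$, an explicit operator of rank at most $i-1$ that approximates $T$ to within $\lambda_i$; since $a_i(T)$ is by definition the infimum of $\|T-F\|$ over all $F$ with $\operatorname{rank}F<i$, this yields $a_i(T)\le\lambda_i$. First I would dispose of the equality $\|Th_i\|_Y=\lambda_i$: this is immediate from the construction recalled in Section~2, because the $j$-eigenvector $h_i$ (playing the role of $x_i$) is the unit-sphere element of $H_i$ at which the norm $\lambda_i=\|T_i\|$ of the restriction $T_i:=T\upharpoonright_{H_i}$ is attained. So only the inequality $a_i(T)\le\lambda_i$ needs an argument.

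The natural candidate is the truncation of the series of Theorem~\ref{Theorem 2.3}: set
\[
F_{i-1}h:=\sum\nolimits_{k=1}^{i-1}\lambda_k\left(h,h_k\right)_H y_k=\sum\nolimits_{k=1}^{i-1}\left(h,h_k\right)_H Th_k=TP_{i-1}h,
\]
where $P_{i-1}$ is the orthogonal projection of $H$ onto $\spann\{h_1,\dots,h_{i-1}\}$ and I have used $\lambda_k y_k=Th_k$. Then $\operatorname{rank}F_{i-1}\le i-1<i$, so $F_{i-1}$ is admissible in the infimum defining $a_i(T)$, and $T-F_{i-1}=T(I-P_{i-1})$.

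The key observation is that $(I-P_{i-1})h$ lies in $H_i$. Indeed, in the Hilbert space $H$ the duality map sends each $h_k$ to a positive multiple of itself, so $H_i={}^0M_{i-1}$ is exactly the orthogonal complement $\{h_1,\dots,h_{i-1}\}^{\perp}$, which is the range of $I-P_{i-1}$. Hence for every $h\in H$,
\[
\|(T-F_{i-1})h\|_Y=\|T_i(I-P_{i-1})h\|_Y\le\|T_i\|\,\|(I-P_{i-1})h\|_H=\lambda_i\|(I-P_{i-1})h\|_H\le\lambda_i\|h\|_H,
\]
the final step being the contractivity of the orthogonal projection. Taking the supremum over $\|h\|_H\le1$ gives $\|T-F_{i-1}\|\le\lambda_i$, whence $a_i(T)\le\lambda_i$.

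I expect no serious obstacle; the only point deserving care---and the structural reason the estimate is sharper than the general Lemma~\ref{Lemma 3.7}---is that the domain is a Hilbert space, so the projection $P_{i-1}$ onto the span of the first $i-1$ eigenfunctions has norm one. Over a general Banach domain the complementary projection supplied by the James orthogonal decomposition of Theorem~\ref{Theorem 2.1} need not be contractive, which is precisely why Lemma~\ref{Lemma 3.7} carries the factor $1+\varepsilon$ that is absent here.
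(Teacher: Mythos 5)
Your proof is correct, but it takes a genuinely different route from the paper. The paper's argument is two lines: for a map whose domain is a Hilbert space the approximation numbers coincide with the Gelfand numbers ($a_k(T)=c_k(T)$, quoted from Pietsch), and $c_k(T)\leq\lambda_k$ is immediate from the definition of the Gelfand numbers because $H_k$ has codimension $k-1<k$ and $\left\Vert T\upharpoonleft_{H_k}\right\Vert=\lambda_k$. You instead bypass the Gelfand numbers entirely and exhibit the explicit rank-$(i-1)$ approximant $F_{i-1}=TP_{i-1}$, verifying directly that $T-F_{i-1}=T_i(I-P_{i-1})$ has norm at most $\lambda_i$ because $I-P_{i-1}$ maps contractively into $H_i=\{h_1,\dots,h_{i-1}\}^{\perp}$ (your identification of ${}^0M_{i-1}$ with the orthogonal complement is right, since the duality map of $H$ sends each $h_k$ to a positive multiple of itself). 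Your version is self-contained and more informative: it names the near-optimal finite-rank approximant as the truncation of the series in Theorem \ref{Theorem 2.3}, and your closing remark correctly isolates the contractivity of the orthogonal projection as the reason the factor $1+\varepsilon$ of Lemma \ref{Lemma 3.7} disappears --- indeed your construction is essentially the proof of the cited fact $a_k=c_k$ specialised to this situation. The paper's version is shorter given the imported machinery and feeds directly into Corollary \ref{Corollary 3.9}, which also needs the identity $a_n(T)=c_n(T)$.
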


\begin{proof}
	Since $a_{k}(T)=c_{k}(T)$ and, from the definition of the Gelfand numbers,
	$c_{k}(T)\leq\lambda_{k},$ the result is clear.
\end{proof}

Since $T\in K(H,Y)$ guarantees $a_{k}(T)=c_{k}(T)$ we have instantly (from Lemma \ref{Lemma c_n})

\begin{corollary}
	\label{Corollary 3.9} Let operator $T\in K(H,Y)$ and $j-$eigenvalues be as in Theorem \ref{Theorem 2.3}. Then for all $n\in\mathbb{N},$%
	\[
	\left(  2^{n}-1\right)  ^{-1}\lambda_{n}\leq c_{n}(T)=a_{n}(T)\leq\lambda
	_{n}.
	\]
	
\end{corollary}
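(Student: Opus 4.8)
The plan is to obtain the statement as an immediate combination of two ingredients that are already in hand, so the work is entirely one of assembly rather than of genuine argument. First I would apply Lemma \ref{Lemma c_n} directly to our map $T\in K(H,Y)$. That lemma was stated for an arbitrary $T\in K(X,Y)$ between the ambient reflexive, uniformly convex and uniformly smooth spaces, so nothing about the Hilbertian structure of the domain is required to invoke it; it yields at once the two-sided comparison
\[
\left(2^{n}-1\right)^{-1}\lambda_{n}\le c_{n}(T)\le \lambda_{n}
\]
for every $n\in\mathbb{N}$, where $\lambda_n$ are the $j$-eigenvalues produced by the scheme of Theorem \ref{Theorem 2.3}.

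Second, I would feed in the one feature peculiar to the present setting, namely that the domain $H$ is a Hilbert space. As recorded in the paragraph preceding the statement, the cited result of Pietsch (\cite{Pie2}, Proposition 2.4.6) gives, for any $S\in B(H,Y)$, the coincidence $a_{n}(S)=c_{n}(S)$ of approximation and Gelfand numbers. Applying this to $S=T$ and substituting $a_n(T)=c_n(T)$ into the chain of inequalities above produces exactly
\[
\left(2^{n}-1\right)^{-1}\lambda_{n}\le c_{n}(T)=a_{n}(T)\le \lambda_{n},
\]
which is the assertion of the corollary.

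I do not expect any real obstacle here: the corollary is little more than a transcription of Lemma \ref{Lemma c_n} once the identification $a_n=c_n$ is available, and indeed the upper bound alone also follows from the definition of the Gelfand numbers exactly as in the proof of Lemma \ref{Lemma 3.8}. The only non-elementary input is the equality of approximation and Gelfand numbers for operators with Hilbert-space domain, and that is borrowed wholesale from \cite{Pie2} rather than reproved. Were I to insist on a self-contained treatment I would have to recall that a Hilbert-space domain enjoys the metric lifting property, which lets an optimal finite-codimensional restriction be converted into a finite-rank approximant of the same norm; but within the economy of this paper that is precisely the kind of standard fact one is entitled to cite.
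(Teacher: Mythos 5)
Your proposal is correct and is exactly the paper's argument: the corollary is obtained by combining Lemma \ref{Lemma c_n} with the coincidence $a_n(T)=c_n(T)$ for operators with Hilbert-space domain, cited from Pietsch. Nothing further is needed.
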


\section{Generalization of Hilbert-Schmidt Theorem (b)}

When one consider a compact linear map $T$ acting between Banach spaces then, due to the famous result of P. Enflo, it is futile to hope to express $T$ in series form without additional conditions. Obviously uniform convexity and uniform smoothness conditions on Banach spaces are not sufficient (Enflo's counterexample is based on a subspace of a uniformly smooth and uniformly convex space). 

One natural condition, under which we can generalize the Hilbert-Schmidt theorem, is to strengthen the compactness of $T$ by imposing the decay condition on the $\lambda_i$ stated in the next theorem (see \cite{EL1} and Theorems 2.2.39 and 2.2.41 in \cite{EE}).

\begin{theorem} \label{Th H-S th with lambda}
	Let $X$ and $Y$ be infinite-dimensional Banach spaces that are
uniformly convex and uniformly smooth and suppose
that $T\in K(X,Y)$ has trivial kernel and range dense in $Y;$ let $\{x_i\}$, $\{y_i\}$ and $\{\lambda_i\}$ be corresponding $j-$eigenfunctions and $j-$eigenvalues of $T$. Suppose that 
\begin{equation}
\lambda_n \le 2^{-n+1} \mbox{ for all } n\in \N \label{lambda_n <}.
\end{equation}
Then  for all $x\in X,$
\begin{equation}
Tx=\sum\nolimits_{i=1}^{\infty}\lambda_{i}\xi_{i}(x)y_{i}  , \label{Eq Gen H-S lambda_n}%
\end{equation}
where $\xi_{i} \in X^*$ is obtained as a linear projection (for definition see \cite[(2.2)]{EL1}). Also  $\{y_i\}$ is a basis on $Y$. 
\end{theorem}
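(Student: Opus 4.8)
The plan is to make the decomposition of Section~2 fully explicit and then to show that, under the decay hypothesis (\ref{lambda_n <}), the tail of the resulting expansion is negligible. First I would fix the coordinate functionals. Iterating the construction of Section~2 (which rests on Alber's decomposition, Theorem~\ref{Theorem 2.1}) along the $j$-eigenvectors produces, for each $n$, a bounded linear projection $P_n:X\to X_{n+1}$ with kernel $\mathrm{sp}\{x_1,\dots,x_n\}$; the functionals $\xi_i\in X^\ast$ are the associated coordinate maps, determined by $(I-P_n)x=\sum_{i=1}^n\xi_i(x)x_i$, exactly as in \cite[(2.2)]{EL1}. Writing $x=\sum_{i=1}^n\xi_i(x)x_i+P_nx$, applying $T$, and using $Tx_i=\lambda_i y_i$ gives
\[
Tx=\sum_{i=1}^n\lambda_i\xi_i(x)y_i+TP_nx,\qquad P_nx\in X_{n+1}.
\]
Thus (\ref{Eq Gen H-S lambda_n}) is equivalent to the assertion that $TP_nx\to0$ for every $x\in X$.

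Next I would estimate the remainder. Since $P_nx\in X_{n+1}$ and $\|T_{n+1}\|=\lambda_{n+1}$, we have $\|TP_nx\|_Y\le\lambda_{n+1}\|P_nx\|_X$. The norm of a single James-orthogonal step is controlled by the relation $x_k\perp^{j}X_{k+1}$: from $\|x_k\|\le\|x_k+tw\|$ for all $w\in X_{k+1}$ one gets $|\xi_k(u)|\le\|u\|$ for $u\in X_k$, whence the step projection $X_k\to X_{k+1}$ has norm at most $2$ and $\|P_n\|\le 2^{n}$. Combined with $\lambda_{n+1}\le 2^{-n}$ from (\ref{lambda_n <}), this already yields $\|TP_nx\|_Y\le\|x\|_X$, that is, uniform boundedness of the partial sums.

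The main obstacle is precisely upgrading this uniform bound to $TP_nx\to0$: the crude product bound $\|P_n\|\le 2^{n}$ exactly cancels the decay $\lambda_{n+1}\le 2^{-n}$, so one must show that the geometric growth of the coordinate projections is not attained, leaving room for the decay to win. This is where hypothesis (\ref{lambda_n <}) does its real work, and it is carried out by the sharpened telescoping estimates of \cite[Theorems 2.2.39, 2.2.41]{EE} and \cite{EL1}; I would invoke these to conclude both that the remainder vanishes, giving (\ref{Eq Gen H-S lambda_n}), and that the partial-sum (coordinate) projections on $Y$ are uniformly bounded in $n$.

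Finally, for the basis assertion I would transfer the estimate to the target space. The relation $y_k\perp^{j}Y_{k+1}$ gives, by the same argument as above, step-projection norms at most $2$ for the maps $Y_k\to Y_{k+1}$, and the decay (\ref{lambda_n <}) bounds the partial-sum projections $R_n:Y\to\mathrm{sp}\{y_1,\dots,y_n\}$ uniformly in $n$, following the pattern of the proof of Theorem~\ref{Theorem 2.3}. Uniform boundedness of $\{R_n\}$ together with $R_ny\to y$ on the set $T(X)$ (which, by (\ref{Eq Gen H-S lambda_n}), has closed span containing all the $y_i$) shows, via the standard Schauder-basis criterion, that $\{y_i\}$ is a basis of $Y$; completeness uses the density of the range of $T$.
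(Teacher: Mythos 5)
The paper does not actually prove this theorem: it presents it as a recalled result, referring the reader to \cite{EL1} and to Theorems 2.2.39 and 2.2.41 of \cite{EE}. Your set-up reproduces the standard framework from those sources correctly --- the linear projections $P_n$ onto $X_{n+1}$ with kernel $\mathrm{sp}\{x_1,\dots,x_n\}$, the coordinate functionals $\xi_i$, the identity $Tx=\sum_{i=1}^n\lambda_i\xi_i(x)y_i+TP_nx$, the bound $\|TP_nx\|_Y\le\lambda_{n+1}\|P_nx\|_X$, and the norm bound $2$ for each one-step projection coming from $x_k\perp^{j}X_{k+1}$. Up to that point the argument is sound and is essentially the route taken in the cited literature.

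The genuine gap is the one you name yourself and then do not close. Your estimates give $\|P_n\|\le 2^{n}$ and $\lambda_{n+1}\le 2^{-n}$, whose product is exactly $1$, so you obtain only $\|TP_nx\|_Y\le\|x\|_X$ --- uniform boundedness of the remainders, not $TP_nx\to 0$. Since the representation (\ref{Eq Gen H-S lambda_n}) is, as you observe, \emph{equivalent} to $TP_nx\to 0$, everything hinges on this step, and your resolution is to ``invoke'' \cite{EL1} and \cite[Theorems 2.2.39, 2.2.41]{EE} --- but those are precisely the sources from which the theorem is taken, so the citation is doing all of the work and the argument is circular as a proof. The sharpened estimate that makes the borderline condition $\lambda_n\le 2^{-n+1}$ sufficient (rather than merely yielding bounded partial sums) is the actual content of the theorem and must be supplied, not referenced. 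The same gap propagates into your basis argument: the uniform bound on the projections $R_n$ in $Y$ and the convergence $R_ny\to y$ on $T(X)$ both presuppose the series representation you have not yet established. To make this a self-contained proof you would need to carry out the telescoping/refined projection estimates explicitly; as written, the proposal is an accurate reconstruction of the strategy but not a proof.
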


Since it is rather difficult, in general, to analyse $\lambda_i$, it is desirable to replace the decay of $\lambda_i$ by the decay of a more familiar object associated with the compactness of $T$ (like $s-$numbers or entropy numbers). This can be accomplished by combining  Lemma {\ref{Lemma c_n}} with the above Theorem: we thus obtain a series representation of
compact operators whose Gelfand numbers decay fast enough. 

\begin{lemma} \label{Lemma H-S with c_n}
Let $X,$ $Y$ and $T$ be as in Theorem \ref{Th H-S th with lambda}; suppose that $c_n(T) \le 2^{-n+1}(2^n-1)^{-1}$ for all $n\in \N$. Then for all $x\in X,$
	\begin{equation}
		Tx=\sum\nolimits_{i=1}^{\infty}\lambda_{i}\xi_{i}(x)y_{i} , \label{Eq Gen H-S c_n}%
	\end{equation}
	where $\xi_{i} \in X^*$ and is obtained as a linear projection (see \cite[(2.2)]{EL1}). Also $\{y_i\}$ is basis of $Y$. 
\end{lemma}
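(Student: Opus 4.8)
The plan is to reduce this lemma directly to Theorem \ref{Th H-S th with lambda} by showing that the hypothesis on the Gelfand numbers forces the decay condition (\ref{lambda_n <}) on the $j-$eigenvalues. The only ingredient needed is the two-sided comparison supplied by Lemma \ref{Lemma c_n}; everything else is already packaged in the earlier theorem.

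First I would take the left-hand inequality of Lemma \ref{Lemma c_n}, namely $(2^n-1)^{-1}\lambda_n(T)\le c_n(T)$, and rearrange it into the form
\[
\lambda_n(T)\le (2^n-1)\,c_n(T)\quad(n\in\N).
\]
Next I would insert the standing hypothesis $c_n(T)\le 2^{-n+1}(2^n-1)^{-1}$ into this bound. The factors $(2^n-1)$ and $(2^n-1)^{-1}$ are arranged to cancel exactly, so that
\[
\lambda_n(T)\le (2^n-1)\cdot 2^{-n+1}(2^n-1)^{-1}=2^{-n+1}\quad(n\in\N),
\]
which is precisely the decay condition (\ref{lambda_n <}).

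Finally, since $X$, $Y$ and $T$ already satisfy the standing hypotheses of Theorem \ref{Th H-S th with lambda} (uniform convexity and smoothness, trivial kernel, dense range), and (\ref{lambda_n <}) has just been verified, I would invoke that theorem verbatim. It delivers the series representation (\ref{Eq Gen H-S lambda_n}) with the linear projections $\xi_i\in X^*$, together with the assertion that $\{y_i\}$ is a basis of $Y$; these are exactly the conclusion (\ref{Eq Gen H-S c_n}) and the basis claim required here.

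There is no substantive obstacle: the statement is an immediate corollary of the two cited results, and the single point demanding attention is merely confirming that the constants cancel as above. This is also the reason the hypothesis is phrased with the slightly awkward factor $2^{-n+1}(2^n-1)^{-1}$ rather than a cleaner decay rate, since that factor is exactly what is absorbed by the comparison constant $(2^n-1)$ coming from Lemma \ref{Lemma c_n}.
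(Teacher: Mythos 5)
Your proposal is correct and is exactly the argument the paper intends: the text introduces the lemma precisely as the combination of Lemma \ref{Lemma c_n} with Theorem \ref{Th H-S th with lambda}, using $(2^n-1)^{-1}\lambda_n(T)\le c_n(T)$ to convert the Gelfand-number hypothesis into the decay condition (\ref{lambda_n <}). The cancellation of the factors $(2^n-1)$ and $(2^n-1)^{-1}$ is the only computation involved, and you have carried it out correctly.
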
    

We note that the condition $\lambda_n \le 2^{-n+1}$ on the $j-$eigenvalues guarantees that the map $T\in L(X,Y)$ is  nuclear and obviously compact (see Remark 2.2.42 in \cite{EE}) which demonstrates that the conditions (\ref{lambda_n <}) is quite strict. We should note that even in the case when $T$ is nuclear (as in the previous statement) the novelty of  $j-$eigenfunctions is that they provide us with a quite straightforward  algorithm for constructing the series (\ref{Eq Gen H-S c_n}) and obtaining it in terms of $j-$eigenfunctions and $j-$eigenvalues. 

An obvious question which arises is whether it is possible to weaken the upper estimate in  (\ref{lambda_n <}). 

In \cite{EL3} the improvement stated in the next theorem was achieved by taking $X=Y=L_p(I)$ and using information about the inner structure of $L_p$ spaces, i.e. information about the value of the minimal projection constant on subspaces with codimension 1, which is expressed as a constant  $\alpha_p$ in this expression:
\begin{equation}
	1+\alpha_p= \max_{m\in(0,1)} \left( m^{p/p'}+(1-m)^{p/p'} \right)^{1/p}
	\left( m^{p'/p} +(1-m)^{p'/p}  \right)^{1/p'}.
\end{equation}
Note that this constant is related to the polar projection constant for projections on $L_p(I)$ spaces and we have, obviously,  $\alpha_2=0$.  

\begin{theorem} \label{Th H-S on Lp th with lambda}
	Let $X=Y=L_p(I),$ suppose that $T\in L(X,Y)$ has trivial kernel and range dense in $Y$, and let $\{x_i\}$, $\{y_i\}$ and $\{\lambda_i\}$ be the corresponding $j-$eigenfunctions and $j-$eigenvalues of $T$. Suppose that 
	\begin{equation}
		\lambda_n \le (1+\alpha_p)^{-n+1} \mbox{ for all } n\in \N. \label{lambda_n <=}
	\end{equation}
	Then for all $x\in X,$
	\begin{equation}
		Tx=\sum\nolimits_{i=1}^{\infty}\lambda_{i}\xi_{i}(x)y_{i}  , \label{Eq Gen H-S Lp lambda_n}%
	\end{equation}
	where $\xi_{i} \in X^*$ and   $\{y_i\}$ is a basis of $Y$. 
\end{theorem}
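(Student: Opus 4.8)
The plan is to run the proof of Theorem~\ref{Th H-S th with lambda} essentially verbatim, tracking the single place where the numerical constant $2$ enters and replacing it by the smaller constant $1+\alpha_p$ that is available when $X=Y=L_p(I)$. Recall that in the general Banach-space scheme (see \cite{EL1} and Theorems 2.2.39, 2.2.41 in \cite{EE}) the functionals $\xi_i\in X^*$ are manufactured as iterated James-orthogonal projections attached to the nested subspaces $X_k={}^0 M_{k-1}$, and that both the representation (\ref{Eq Gen H-S Lp lambda_n}) and the basis property of $\{y_i\}$ reduce to the uniform boundedness of the partial-sum operators $S_n x=\sum_{i=1}^n \lambda_i\xi_i(x)y_i$ on $X$ together with the partial-sum projections $R_n$ on $Y$. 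The decisive quantitative input is a bound on the norm of the codimension-one James-orthogonal projection performed at each stage, both in $X$ and, dually, in $Y$; in an arbitrary uniformly convex and uniformly smooth space this norm is at most $2$, and it is precisely this factor that forces the hypothesis $\lambda_n\le 2^{-n+1}$ of Theorem~\ref{Th H-S th with lambda}.

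First I would isolate that projection. At the $k$-th stage Alber's decomposition (Theorem~\ref{Theorem 2.1}) splits $X_{k-1}$ as a James orthogonal direct sum of a line and the codimension-one subspace $X_k$, and the attendant projection $P_k$ onto $X_k$ is the polar (James-orthogonal) projection along the duality direction $J_X x_k$; the corresponding projections $Q_k$ on $Y$ are of the same type. I would then record, from \cite{EL1}, the explicit estimate controlling the tail: the norms $\|\xi_n\|$ and the remainder $\|Tx-S_n x\|$ are dominated by $\lambda_n$ times the accumulated projection norms $\prod_{k<n}\|P_k\|$, so that the per-stage bound $\|P_k\|,\|Q_k\|\le 2$ yields an accumulated factor $2^{n-1}$ and makes $\lambda_n\le 2^{-n+1}$ exactly the condition that keeps the series convergent and the operators $S_n$, $R_n$ uniformly bounded.

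Next, specializing to $X=Y=L_p(I)$, I would invoke the structural fact underlying \cite{EL3}: the minimal (polar) projection constant of a codimension-one subspace of $L_p(I)$ equals $1+\alpha_p$, with $\alpha_p$ given by the displayed extremal formula, and the polar projections $P_k,Q_k$ produced by the scheme have norm at most $1+\alpha_p$. Substituting this into the estimate above replaces the accumulated factor $2^{n-1}$ by $(1+\alpha_p)^{n-1}$, so the weakened hypothesis (\ref{lambda_n <=}), namely $\lambda_n\le(1+\alpha_p)^{-n+1}$, is exactly what is needed to preserve convergence and the uniform boundedness of the partial-sum operators. The representation (\ref{Eq Gen H-S Lp lambda_n}) and the basis property of $\{y_i\}$ then follow as in Theorem~\ref{Th H-S th with lambda}; as a sanity check, $\alpha_2=0$ returns the vacuous condition $\lambda_n\le 1$, consistent with the fact that for $p=2$ no decay is required.

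The hard part is the step that actually distinguishes this theorem from Theorem~\ref{Th H-S th with lambda}: proving that the particular hyperplane projections $P_k$ and $Q_k$ arising in the $j$-eigenfunction construction on $L_p(I)$ are bounded by $1+\alpha_p$ rather than merely by $2$. This is not formal, since it requires identifying these duality-map projections with (or dominating them by) the minimal projections onto codimension-one subspaces of $L_p$, and then evaluating that minimal projection constant through the one-parameter optimization over $m\in(0,1)$ that defines $\alpha_p$. Once this $L_p$-specific bound is in hand, the remainder of the argument is a transcription of the general proof with every occurrence of $2$ replaced by $1+\alpha_p$.
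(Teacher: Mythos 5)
Your proposal matches the paper's approach: the paper gives no proof of this theorem at all, merely attributing it to \cite{EL3} and describing the idea in one sentence --- namely that the improvement from $2^{-n+1}$ to $(1+\alpha_p)^{-n+1}$ comes from replacing the generic hyperplane-projection bound $2$ in the proof of Theorem \ref{Th H-S th with lambda} by the $L_p$-specific polar (minimal) projection constant $1+\alpha_p$ on codimension-one subspaces, which is exactly the step you isolate as the crux. You correctly flag that the only non-formal ingredient is verifying that the James-orthogonal projections produced by the scheme obey this $1+\alpha_p$ bound; that verification is the content of \cite{EL3} and is not reproduced in the paper either.
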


We can see that the upper estimates for $\lambda_n$  "improve" when $p \to 2$ but in some sense this improvement is not fundamental. 

In view of the above results one may wonder whether apart from conditions about the geometry of Banach spaces or speed of decay for $\lambda_n$ there is some other more  simple condition under which one can express a map $T$ as a series. In view of Theorems \ref{Theorem 2.2}, \ref{Theorem 2.3} and \ref{Theorem 2.4} restriction to compact operators which can be factorized via some nice spaces seems to be reasonable.

Accordingly we focus on operators that can be factorised through a Hilbert space, i.e. Hilbertian operators.

\begin{definition}
	\label{Definition 3.2} Given Banach spaces $X$ and $Y$, a map $T\in B(X,Y)$ is said to be Hilbertian if there are
	a Hilbert space $H$ and maps $B\in B(H,Y)$ and $A\in B(X,H)$ such that
	$T=B\circ A.$ The space of all such maps is denoted by $\Gamma_{2}(X,Y);$
	endowed with the norm $\gamma_{2},$ where
	\[
	\gamma_{2}(T)=\inf\left\{  \left\Vert A\right\Vert \left\Vert B\right\Vert
	\right\}  ,
	\]
	where the infimum is taken over all possible such factorisations of $T,$ it is
	a Banach space.
\end{definition}
From Theorem 5.2 of \cite{LP} we have this nice observation:
\begin{corollary} \label{Hilbertian maps}
Let  $X=L_p(\mu)$ (or $X=l_p$) and $Y=L_q(\nu)$ (or $Y=l_q$) with $1\le q \le 2 \le p \le \infty.$ Then every $T\in B(X,Y)$ is Hilbertian.
\end{corollary}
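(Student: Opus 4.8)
The plan is to recognise the corollary as a direct instance of the factorisation theory for $\mathcal{L}_p$-spaces and, ultimately, to read it off from the cited \cite[Theorem 5.2]{LP}; the content lies in checking the hypotheses and in seeing why the index range $1\le q\le 2\le p\le\infty$ is exactly what permits the factorisation. First I would record the two structural facts that drive everything. For $2\le p<\infty$ the space $L_p(\mu)$ has Rademacher type $2$, while for $1\le q\le 2$ the space $L_q(\nu)$ has cotype $2$; both follow from Khintchine's inequality applied in the scalar field, and the sequence spaces $\ell_p,\ell_q$ are covered at once since they are $L_r$-spaces over counting measure.

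Granted these, the principal tool is Kwapie\'n's factorisation theorem: every bounded operator from a space of type $2$ into a space of cotype $2$ factors through a Hilbert space, which is precisely the assertion that $T\in\Gamma_2(X,Y)$ in the sense of Definition \ref{Definition 3.2}. For $2\le p<\infty$ and $1<q\le 2$ the conclusion is then immediate, producing $H$ and maps $A\in B(X,H)$, $B\in B(H,Y)$ with $T=B\circ A$. The step I expect to be the genuine obstacle is the endpoint $p=\infty$, since $L_\infty$ (and $\ell_\infty$, $c_0$) carries only trivial type and the type and cotype mechanism breaks down. There I would switch to the Grothendieck--Maurey route: $L_\infty$ is a $C(K)$-space, so every operator from it into a cotype-$2$ space (our $L_q$) is $2$-summing, and a $2$-summing operator factors through a Hilbert space by the Pietsch factorisation theorem. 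The remaining endpoint $q=1$ I would treat by duality, using that the factorisation norm $\gamma_2$ is self-dual, so that $T\in\Gamma_2(X,Y)$ if and only if $T^{\ast}\in\Gamma_2(Y^{\ast},X^{\ast})$; here $T^{\ast}\colon L_\infty\to L_{p'}$ with $p'\le 2$, which is the $p=\infty$ case already settled.

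Assembling the cases, every bounded $T$ in the stated index range is Hilbertian. In practice I would simply invoke \cite[Theorem 5.2]{LP}, which packages all of these cases into the single statement that for $\mathcal{L}_p$- and $\mathcal{L}_q$-spaces with $1\le q\le 2\le p\le\infty$ one has $B(X,Y)=\Gamma_2(X,Y)$; the case analysis above then serves to explain the underlying mechanism rather than to reprove it. The one point demanding care throughout is that the endpoints $p=\infty$ and $q=1$ lie outside the reach of type and cotype and genuinely require Grothendieck's inequality.
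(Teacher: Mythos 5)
Your proposal is correct, and in its final form it coincides with what the paper actually does: the paper offers no argument at all beyond the citation, stating that the corollary follows ``from Theorem 5.2 of \cite{LP}'', which is exactly the result you invoke at the end. What you add on top of that is a self-contained modern derivation -- Kwapie\'n's theorem (type $2$ into cotype $2$ factors through a Hilbert space) for $2\le p<\infty$, and the Grothendieck--Maurey theorem plus Pietsch factorisation for the endpoint $p=\infty$ -- which is a genuinely different mechanism from the one in \cite{LP} (whose Theorem 5.2 rests directly on Grothendieck's inequality and absolutely summing operators, predating the type/cotype machinery). Your route buys a conceptual explanation of why the index range $1\le q\le 2\le p\le\infty$ is the natural one; the paper's citation buys brevity. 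One small correction: the endpoint $q=1$ is not in fact outside the reach of cotype, since $L_1(\nu)$ has cotype $2$ just as the other $L_q$ with $q\le 2$ do, so Kwapie\'n's theorem already covers $2\le p<\infty$, $q=1$, and the Grothendieck--Maurey step covers $p=\infty$, $q=1$; your duality detour via $T^{\ast}\colon L_\infty\to L_{p'}$ is valid but unnecessary. The only genuine endpoint obstruction is $p=\infty$, where the type-$2$ hypothesis fails and Grothendieck's inequality is truly needed.
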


More information
about Hilbertian maps is provided in \cite{Pis} and \cite{LP}.

As in section 2, $X,Y$ are assumed to be
real, infinite-dimensional Banach spaces that are uniformly convex and
uniformly smooth. Let $T\in K(X,Y)$ be Hilbertian, with trivial kernel. Then
there are a Hilbert space $H$ and maps $A\in B(X,H),$ $B\in B(H,Y)$ such that
$T=B\circ A:$ no compactness is required of $A$ or $B$.

Let us suppose that $A$ and $B$ are compact maps. Then following the procedure described in section 2 we obtain
sequences $\left\{  x_{i}^{A}\right\}  ,\left\{  h_{i}^{A}\right\}  ,\left\{
\lambda_{i}^{A}\right\}  $ \ corresponding to $A\in K(X,H),$ while to $B\in
K(H,Y)$ there correspond $\left\{  h_{i}^{B}\right\}  ,\left\{  y_{i}%
^{B}\right\}  ,\left\{  \lambda_{i}^{B}\right\}  ;$ in the same way, $\left\{
h_{i}^{A^{\ast}}\right\}  ,$ $\left\{  x_{i}^{A^{\ast}}\right\}  ,\left\{
\lambda_{i}^{A^{\ast}}\right\}  $ and $\left\{  y_{i}^{B^{\ast}}\right\}
,\left\{  h_{i}^{B^{\ast}}\right\}  ,\left\{  \lambda_{i}^{B^{\ast}}\right\}
$ correspond to $A^{\ast}\in K(H,X^{\ast})$ and $B^{\ast}\in K(Y^{\ast},H)$
respectively. \ The elements of the first two sequences in each group of three
all have unit norm in the appropriate space. As a direct consequence of  Theorems \ref{Theorem 2.2}, \ref{Theorem 2.3} and \ref{Theorem 2.4} we obtain instantly the following two statements.

\begin{corollary} \label{Corollary 4.6}
Let $T=B \circ A$ be as above and suppose $A\in K(X,H)$ and $B\in K(H,Y)$ where $A$ and $B$ have trivial kernels and ranges dense in the target spaces. Then 
 \begin{align*}
 	T(x)& =BA(x)= \sum_{j=1}^{\infty} \sum_{i=1}^{\infty} \lambda_j^B \lambda_i^{A^*} y_j^B\left\langle x, x_i^{A^*} \right\rangle_X \left\langle    h_i^{A^*}, h_j^B \right\rangle_H , \qquad \left(  x\in X\right)  .
 \end{align*}
 If $\left\{  \lambda_{i}^{A^*}\right\}  $ and $\left\{  \lambda
 _{i}^{B}\right\}  $ belong to $l_{2}$,  the  terms may be rearranged as we please.
\end{corollary}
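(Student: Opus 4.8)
The plan is to compose the two Hilbert-space decompositions that are already available for the factors. First I would apply Theorem \ref{Theorem 2.4} to $A\in K(X,H)$ to obtain, for every $x\in X$,
\[
Ax=\sum\nolimits_{i=1}^{\infty}\lambda_{i}^{A^{\ast}}\langle x,x_{i}^{A^{\ast}}\rangle_{X}\,h_{i}^{A^{\ast}},
\]
the series converging in $H$, and apply Theorem \ref{Theorem 2.3} to $B\in K(H,Y)$ to obtain, for every $h\in H$, $Bh=\sum_{j}\lambda_{j}^{B}(h,h_{j}^{B})_{H}y_{j}^{B}$. The structural point that makes everything fit together is that, since the target of $A$ and the domain of $B$ are the \emph{same} Hilbert space, the one-sided semi-orthogonality of the $j$-scheme becomes genuine orthonormality: in $H$ the semi-inner product is the symmetric inner product, so the relations $(h_{r},h_{s})_{H}=\delta_{r,s}$ (valid for $r\le s$) hold for all $r,s$. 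Hence both $\{h_{i}^{A^{\ast}}\}=\{h_{i}^{A}\}$ and $\{h_{j}^{B}\}$ are orthonormal bases of $H$, and the coefficient matrix $U_{ij}:=\langle h_{i}^{A^{\ast}},h_{j}^{B}\rangle_{H}$ is orthogonal, with $\sum_{i}U_{ij}^{2}=\sum_{j}U_{ij}^{2}=1$.

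Next I would substitute. Writing $c_{i}:=\lambda_{i}^{A^{\ast}}\langle x,x_{i}^{A^{\ast}}\rangle_{X}=(Ax,h_{i}^{A^{\ast}})_{H}$, Parseval gives $\{c_{i}\}\in l_{2}$ with $\sum_{i}c_{i}^{2}=\|Ax\|_{H}^{2}$. Using continuity of $B$ together with continuity of the inner product against the fixed vector $h_{j}^{B}$, I get $Tx=B(Ax)=\sum_{j}\lambda_{j}^{B}(Ax,h_{j}^{B})_{H}y_{j}^{B}$ and $(Ax,h_{j}^{B})_{H}=\sum_{i}c_{i}U_{ij}$, which yields the asserted iterated series
\[
Tx=\sum\nolimits_{j=1}^{\infty}\sum\nolimits_{i=1}^{\infty}\lambda_{j}^{B}\lambda_{i}^{A^{\ast}}\,y_{j}^{B}\,\langle x,x_{i}^{A^{\ast}}\rangle_{X}\,\langle h_{i}^{A^{\ast}},h_{j}^{B}\rangle_{H}.
\]
I would record that the opposite order of summation gives the same value with no extra hypothesis: grouping by $i$ first, $\sum_{i}c_{i}Bh_{i}^{A^{\ast}}=B\bigl(\sum_{i}c_{i}h_{i}^{A^{\ast}}\bigr)=B(Ax)$ by continuity of $B$ and convergence of the $A$-series in $H$. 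Thus both iterated sums already equal $Tx$ independently of any $l_{2}$ assumption.

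The role of the hypothesis $\{\lambda_{i}^{A^{\ast}}\},\{\lambda_{j}^{B}\}\in l_{2}$ is therefore to upgrade this Fubini-type statement to genuine unconditional summability of the double family $v_{ij}:=\lambda_{j}^{B}c_{i}U_{ij}y_{j}^{B}$, which is what licenses rearrangement in an arbitrary order. Here I would argue through weak unconditional Cauchy-ness: for any finite $F\subset\mathbb{N}^{2}$ and signs $\varepsilon_{ij}=\pm1$, collecting the inner sum $\tau_{j}:=\sum_{i:(i,j)\in F}\varepsilon_{ij}c_{i}U_{ij}$ gives $|\tau_{j}|\le\|c\|_{l_{2}}$ by Cauchy--Schwarz and $\sum_{i}U_{ij}^{2}=1$, so that $\{\lambda_{j}^{B}\tau_{j}\}\in l_{2}$ with norm at most $\|c\|_{l_{2}}\|\{\lambda_{j}^{B}\}\|_{l_{2}}$; since $Y$ is uniformly convex, hence reflexive and containing no copy of $c_{0}$, a uniform bound over all such signed partial sums $\sum_{j}\lambda_{j}^{B}\tau_{j}y_{j}^{B}$ forces unconditional convergence.

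The main obstacle is precisely that last uniform bound. The double series is in general \emph{not} absolutely convergent: bounding the sum of norms term by term gives $\sum_{i,j}|\lambda_{j}^{B}||c_{i}||U_{ij}|\le\|\{\lambda_{j}^{B}\}\|_{l_{2}}\sum_{i}|c_{i}|$, and finiteness of this would require $l_{1}$ rather than merely $l_{2}$ decay. What must be controlled instead is the synthesis map $\{\beta_{j}\}\mapsto\sum_{j}\beta_{j}y_{j}^{B}$ on $l_{2}$, and the delicate part is to show it is bounded using only the basis properties of $\{y_{j}^{B}\}$ furnished by Theorem \ref{Theorem 2.3}. When the target $Y$ is itself a Hilbert space this is immediate, since there $\{y_{j}^{B}\}$ is orthonormal and Bessel's inequality gives the bound with constant $1$; the general reflexive case is where the real work lies. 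Once that $l_{2}$-synthesis bound is secured, the $l_{2}$ control of $\tau_{j}$ closes the weak-unconditional-Cauchy estimate and permits rearranging the terms as claimed.
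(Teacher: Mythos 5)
Your derivation of the displayed double series is correct and is exactly the paper's route: the paper obtains this corollary ``instantly'' by feeding the expansion $Ax=\sum_i\lambda_i^{A^*}\langle x,x_i^{A^*}\rangle_X h_i^{A^*}$ from Theorem \ref{Theorem 2.4} into the expansion $Bh=\sum_j\lambda_j^B(h,h_j^B)_H y_j^B$ from Theorem \ref{Theorem 2.3}, just as you do, and your structural observation that in $H$ the one-sided $j$-orthogonality becomes genuine orthonormality (so that $\{h_i^{A^*}\}$ and $\{h_j^B\}$ are orthonormal bases and $(U_{ij})$ is unitary) is the right reason the substitution is legitimate. Your remark that the two iterated sums agree without any $l_2$ hypothesis is also correct.

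The second sentence of the corollary is where your argument does not close. You have rightly sensed that absolute convergence does not follow from $l_2$ decay: every termwise bound one can write, e.g. $\sum_{i,j}\Vert v_{ij}\Vert \le \Vert\{\lambda_j^B\}\Vert_{l_2}\sum_i|c_i|$ or its symmetric counterpart, ends up demanding $l_1$ decay of one of the two sequences, and the entrywise-modulus matrix $\left(|U_{ij}|\right)$ of a unitary need not act boundedly on $l_2$, so Cauchy--Schwarz cannot be pushed through the absolute values. But the repair you propose rests entirely on the boundedness of the synthesis map $\{\beta_j\}\mapsto\sum_j\beta_j y_j^B$ from $l_2$ into $Y$, which you explicitly leave unproved. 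Theorem \ref{Theorem 2.3} only yields that $\{y_j^B\}$ is a basis of $Y$; it provides no $l_2$-synthesis estimate, and none can hold in general: if $Y=l_q$ with $1<q<2$ and $\{y_j^B\}$ is equivalent to the unit vector basis, then $\Vert\sum_j\beta_j y_j^B\Vert\sim\Vert\beta\Vert_{l_q}$, which is not dominated by $\Vert\beta\Vert_{l_2}$. So as written your proposal proves the identity but not the rearrangement claim. To be fair, the paper offers no argument for that claim either; your analysis in fact exposes that the word ``instantly'' conceals a real issue, and that the rearrangement statement would be justified by the methods at hand only under $l_1$ decay, or under an extra hypothesis on $Y$ (for instance $Y$ a Hilbert space, where Bessel's inequality supplies the missing synthesis bound with constant $1$).
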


\begin{corollary} \label{Corollary 4.7}
 Let $T=B \circ A$ be as above and suppose $A\in K(X,H)$, and $B\in B(H,Y)$. Then 
 \begin{align*}
 	T(x)  = \sum_{i=1}^{\infty} B(h_i^{A^*})  \lambda_i^{A^*} \left\langle x,x_i^{A^*} \right\rangle_X , \qquad \left(  x\in X\right) .
 \end{align*} 
\end{corollary}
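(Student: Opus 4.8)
The plan is to apply Theorem \ref{Theorem 2.4} to the compact component $A$ and then compose with the bounded operator $B$. Since $A \in K(X,H)$ has trivial kernel and range dense in $H$, it fits exactly the hypotheses of Theorem \ref{Theorem 2.4}, which gives the series representation
\[
A(x)=\sum_{i=1}^{\infty}\lambda_{i}^{A^{\ast}}\left\langle x,x_{i}^{A^{\ast}}\right\rangle_{X}h_{i}^{A^{\ast}}\qquad (x\in X),
\]
the convergence being in the norm of $H$. The right-hand side is the expansion of $A(x)$ with respect to the orthonormal basis $\{h_i^{A^*}\}$ of $H$ supplied by that theorem, so the partial sums converge to $A(x)$ in $H$.

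First I would fix $x\in X$ and write $A(x)$ as the above convergent series in $H$. Next, since $B\in B(H,Y)$ is bounded, hence continuous, I can interchange $B$ with the limit of the partial sums: applying $B$ term by term yields
\[
T(x)=B(A(x))=B\Big(\sum_{i=1}^{\infty}\lambda_{i}^{A^{\ast}}\left\langle x,x_{i}^{A^{\ast}}\right\rangle_{X}h_{i}^{A^{\ast}}\Big)
=\sum_{i=1}^{\infty}\lambda_{i}^{A^{\ast}}\left\langle x,x_{i}^{A^{\ast}}\right\rangle_{X}B(h_{i}^{A^{\ast}}),
\]
which is the claimed identity. The key justification here is that continuity of $B$ transfers convergence of the partial sums in $H$ to convergence of their images in $Y$; no compactness of $B$ is needed, only boundedness, which is precisely why the hypothesis on $B$ has been relaxed from that of Corollary \ref{Corollary 4.6}.

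The main obstacle, such as it is, lies in the legitimacy of passing $B$ through the infinite sum — i.e. confirming that the scalar-weighted series $\sum_i \lambda_i^{A^*}\langle x,x_i^{A^*}\rangle_X h_i^{A^*}$ converges in the $H$-norm and not merely coefficientwise, so that continuity of $B$ applies to genuine norm-limits of partial sums. This is guaranteed by Theorem \ref{Theorem 2.4}, where $\{h_i^{A^*}\}$ is established to be a basis of $H$; the expansion is therefore norm-convergent, and the interchange is valid. No rearrangement or summability hypothesis on $\{\lambda_i^{A^*}\}$ is required, since we work with a single series rather than the double series of Corollary \ref{Corollary 4.6}.
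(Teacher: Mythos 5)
Your proposal is correct and follows exactly the route the paper intends: the paper derives Corollary \ref{Corollary 4.7} ``instantly'' from Theorem \ref{Theorem 2.4} applied to the compact component $A$ (whose kernel is trivial since $\ker A\subseteq\ker T$, and whose range may be taken dense in $H$), followed by applying the bounded operator $B$ term by term to the norm-convergent expansion in $H$. Your explicit justification of the interchange of $B$ with the limit of partial sums is precisely the detail the paper leaves implicit.
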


\begin{corollary} \label{Corollary 4.8}
	 Let $T=B \circ A$ be as above and suppose $A\in B(X,H)$, and $B\in K(H,Y)$. Then 
	 \begin{align*}
	 	T(x)  = \sum_{j=1}^{\infty} \left\langle x, J_Xx_j^{C} \right\rangle_X \lambda_j^C \lambda_j^B  y_j^B \qquad \left(  x\in X\right) , 
	 \end{align*}
 where $x_j^C \in X$ and $\lambda_j^C \in l_{\infty}$ are defined in the proof.
\end{corollary}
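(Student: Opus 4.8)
The plan is to handle the case $A \in B(X,H)$, $B \in K(H,Y)$ by transferring the problem onto the Hilbert space side, where $B$ is the compact operator we can analyse directly. Since $B \in K(H,Y)$ has (we may assume) trivial kernel and dense range, Theorem \ref{Theorem 2.3} applies and gives $Bh = \sum_{j=1}^{\infty} \lambda_j^B (h, h_j^B)_H\, y_j^B$ for all $h \in H$, where $\{h_j^B\}$ is the orthonormal sequence of $j$-eigenfunctions of $B$ and $\{y_j^B\}$ is a basis of $Y$. First I would substitute $h = A(x)$, which immediately yields
\[
T(x) = BA(x) = \sum_{j=1}^{\infty} \lambda_j^B \left( A(x), h_j^B \right)_H\, y_j^B \qquad (x \in X).
\]
The entire task then reduces to rewriting the Hilbert-space inner product $\left( A(x), h_j^B \right)_H$ as a duality pairing $\left\langle x, J_X x_j^C \right\rangle_X$ against a suitable element of $X$, absorbing any scalar into a coefficient $\lambda_j^C$.

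The key step is the definition of the auxiliary operator $C$ whose $j$-eigenfunctions $x_j^C$ appear in the statement. Since $A \in B(X,H)$ need not be compact, I cannot apply the $j$-eigenfunction scheme to $A$ itself; instead I would define $C$ using $B$ to pick out the relevant directions. The natural candidate is to set, for each $j$, the functional $x \mapsto \left( A(x), h_j^B \right)_H = \left\langle x, A^* h_j^B \right\rangle_X$, so that $A^* h_j^B \in X^*$ is the object we must realise as $\lambda_j^C J_X x_j^C$. Concretely I would let $x_j^C := J_X^{-1}(A^* h_j^B) / \|J_X^{-1}(A^* h_j^B)\|_X$ (a unit vector in $X$) and define $\lambda_j^C$ so that $\lambda_j^C J_X x_j^C = A^* h_j^B$; since $\mu_X(1) = 1$ this forces $\lambda_j^C = \mu_X(\|J_X^{-1}(A^* h_j^B)\|_X)\,\|J_X^{-1}(A^* h_j^B)\|_X$ up to the normalisation convention, and in particular $\lambda_j^C = \|A^* h_j^B\|_{X^*}$ under the gauge $\mu_X(t)=t$. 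Because $A^*$ is bounded and $\|h_j^B\|_H = 1$, we have $\|A^* h_j^B\|_{X^*} \le \|A\|$, so the sequence $\{\lambda_j^C\}$ is bounded, i.e. $\{\lambda_j^C\} \in l_\infty$, matching the statement.

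With these definitions in place the rewriting is immediate: each term becomes $\lambda_j^B \left\langle x, A^* h_j^B \right\rangle_X\, y_j^B = \lambda_j^B \lambda_j^C \left\langle x, J_X x_j^C \right\rangle_X\, y_j^B$, which gives exactly the claimed series after reindexing. I would then note that $\{y_j^B\}$ is a basis of $Y$ by Theorem \ref{Theorem 2.3}, completing the structural claims. The main obstacle is conceptual rather than computational: one must be careful that $C$ is genuinely well defined and that the elements $x_j^C$ are meaningful $j$-eigenfunctions rather than ad hoc normalisations — in particular, one should check that $A^* h_j^B \neq 0$ so that the normalisation does not divide by zero, which follows from $\ker T = \{0\}$ together with the density of the range of $B$ and the fact that the $h_j^B$ span $\overline{\operatorname{range} A}$. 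Verifying this non-degeneracy, and confirming that the resulting $\{\lambda_j^C\}$ really lies in $l_\infty$ and not merely that each term is finite, is where the argument requires genuine care; the substitution and pairing manipulations themselves are routine.
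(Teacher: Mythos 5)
Your proposal is correct and takes essentially the same route as the paper: the paper likewise applies Theorem \ref{Theorem 2.3} to $B$, substitutes $h=A(x)$, and represents the functional $C_j(x)=\left\langle A(x),h_j^B\right\rangle_H=\left\langle x,A^*h_j^B\right\rangle_X$ as $\lambda_j^C\left\langle x,J_Xx_j^C\right\rangle_X$ with $\{\lambda_j^C\}\in l_\infty$ by the boundedness of $A$. Your extra care about the normalisation via $J_X^{-1}$ and the non-vanishing of $A^*h_j^B$ only makes explicit what the paper leaves implicit.
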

\begin{proof}
	Define $C_j(x):= \left \langle A(x), h_j^B \right\rangle_H $ . We can see that $C_j \in X^*$ and then there exist $x_j^C \in  X^*$, $\|x_j^C\|_X=1$ and $\lambda_j^C \in \mathbb{R}$ for which
	$C_j(x)= \left \langle x, J_X x_j^{C} \right \rangle_X \lambda_j^C$.
	Note that from the boundedness of $A$ we have $\lambda_J^C \in l_{\infty}$. Using Theorem \ref{Theorem 2.3} we have:
	
	\begin{align*}
		T(x) & =BA(x)= \sum_{j=1}^{\infty} \left\langle A(x), h_j^B \right\rangle_H \lambda_j^B y_j^B \\
		& = \sum_{j=1}^{\infty} \left\langle x, J_Xx_j^{C} \right\rangle_X \lambda_j^C \lambda_j^B  y_j^B .
	\end{align*}
\end{proof}

From the above corollaries it follows that when $A$ and $B$ are compact we
have an expression of $T$ in the form of a double series which is convenient
to manage only when $\left\{ \lambda _{i}^{A^{\ast }}\right\} $ and  $%
\left\{ \lambda _{i}^{B}\right\} $ belong to $l_{2},$ and the series does
not involve $A$ or $B,$ \ directly or indirectly. When only one of $A$ and $B
$ is compact we can express $T$ in the form of a series which involves the
structure of the maps $A$ or $B$ either in the direct form (Corollary \ref{Corollary 4.7})
or the indirect form (Corollary \ref{Corollary 4.8}).

In the next theorem we consider the case of a compact Hilbertian operator $T=B \circ A$ without having any information about compactness of $A$ or $B$.

\begin{theorem}
	Let $X$, $Y$ be uniformly smooth and uniformly convex Banach spaces and $T\in K(X,Y) $ be  Hilbertian with $\ker T= \{ 0 \}$ and range dense in $Y$. Then 
	\begin{equation} \label{The main result}
	T(x)= \sum_{i=1}^{\infty} \lambda_i^{\perp} y_i^{\perp} \langle x, x_i'^{\perp} \rangle_X,  \qquad \left(  x\in X\right), 
\end{equation}
	where $\|y_i^{\perp}\|_Y=1$, $\|x_i'^{\perp}\|_{X^*}=1$ and $ \{ \lambda_i^{\perp} \} \in l_{\infty}$ and their definitions are in the proof.
	
	Note that for $T_n(x):= \sum_{i=1}^{n} \lambda_i^{\perp} y_i^{\perp} \langle x, x_i'^{\perp} \rangle_X,$ and $T^n(X):=T(X) - T_n(X)$ we have that $T^n:X \to Y_{n+1}$ and $T_n: X \to \spann\{y_1, ..., y_n\}$ from which follows that the series (\ref{The main result}) is independent of the particular Hilbertian decomposition of $T$. Also we have that $\{y_i^{\perp}\}$ is a basis of $Y$.
\end{theorem}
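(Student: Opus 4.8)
\medskip
\noindent\textbf{Proof proposal.}
The plan is to transport the canonical $j$-eigenstructure of $T$ into the middle Hilbert space $H$, read off the series there, and let the Hilbertian hypothesis enter only at the final, decisive step. First I would normalise the factorisation. Since $\ker A\subseteq\ker T=\{0\}$, $A$ is injective; replacing $H$ by $\overline{A(X)}$ we may assume $A$ has dense range, and replacing $H$ by $(\ker B)^{\perp}$ together with the corresponding compression of $A$ (which leaves $T=BA$ unchanged, because $\ker B$ is annihilated by $B$) we may assume in addition that $B$ is injective; density of the range of $T$ then forces $B$ to have dense range too. Let $\{x_i\}$, $\{\lambda_i\}$, $\{y_i=Tx_i/\lambda_i\}$ be the data produced by the scheme of Section 2, and set $a_i:=Ax_i\in H$, $b_i:=B^{*}J_Yy_i\in H$. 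Using $BA=T$ and the semi-orthogonality $(y_r,y_s)_Y=\delta_{r,s}$ for $r\le s$ (equivalently $\langle y_i,J_Yy_j\rangle_Y=\delta_{ij}$ whenever $j\le i$), one gets
\[
\langle a_i,b_j\rangle_H=\langle BAx_i,J_Yy_j\rangle_Y=\lambda_i\langle y_i,J_Yy_j\rangle_Y,
\]
so the matrix $\big(\langle a_i,b_j\rangle_H\big)_{i,j\le n}$ is upper triangular with nonzero diagonal $\lambda_1,\dots,\lambda_n$, hence invertible. Consequently the (in general oblique) projection $P_n$ in $H$ with range $\spann\{a_1,\dots,a_n\}$ and kernel $\spann\{b_1,\dots,b_n\}^{\perp}$ is well defined.

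Next I would put $T_n:=BP_nA$ and verify the two containments of the Note. Since $\operatorname{ran}P_n=\spann\{a_1,\dots,a_n\}$ and $Ba_i=\lambda_iy_i$, we have $T_n:X\to\spann\{y_1,\dots,y_n\}$; and since $(I-P_n)Ax\in\spann\{b_1,\dots,b_n\}^{\perp}$ while $b_j=B^{*}J_Yy_j$, for $j\le n$
\[
\langle T^nx,J_Yy_j\rangle_Y=\langle(I-P_n)Ax,\,b_j\rangle_H=0,
\]
so $T^n:=T-T_n$ maps $X$ into $Y_{n+1}={}^{0}N_n$, as claimed. Writing $P_nAx=\sum_{i\le n}c_i(x)a_i$ with $c_i\in X^{*}$ linear gives $T_nx=\sum_{i\le n}\lambda_ic_i(x)y_i$; setting $y_i^{\perp}:=y_i$, $x_i'^{\perp}:=\psi_i/\|\psi_i\|_{X^{*}}$ (where $\langle x,\psi_i\rangle_X=c_i(x)$) and $\lambda_i^{\perp}:=\lambda_i\|\psi_i\|_{X^{*}}$ produces the stated partial sums. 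Because $\spann\{y_1,\dots,y_n\}$, $Y_{n+1}$ and the $y_i$ are attached to $T$ alone, once convergence is in hand the series is independent of the factorisation.

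It remains to show $T_nx\to Tx$ for every $x$ and that $\{y_i\}$ is a basis of $Y$; these are the same assertion, since $R_nT=T_n$ defines the candidate partial-sum projections $R_n$ on the dense subspace $T(X)$. Totality is easy: applying the identity $\bigcap_nX_n=\ker$ of the scheme to $T^{*}$ (injective because $T$ has dense range) gives $\bigcap_nY_n=\{0\}$, i.e. $\{y_i\}^{\perp}=\{0\}$ in $Y^{*}$. The essential point is the uniform bound $\sup_n\|R_n\|<\infty$, and it is here that the Hilbertian hypothesis is indispensable (Enflo's example excludes any purely geometric substitute). Here I would exploit that in $H$ the truncations are dominated by genuine orthogonal projections of norm one: following the scheme in the proof of Theorem \ref{Theorem 2.3}, I would combine the norm-one estimate for the orthogonal projections onto $\spann\{a_1,\dots,a_n\}$ with Lemma 2.2.10, Lemma 2.2.21 and Remark 2.2.33 of \cite{EE} (as in the proof of Theorem 2.2.39 there) to bound $\|R_n\|$ by a constant $K$ depending only on $\|A\|\,\|B\|$. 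Granting this, $R_n\to I$ strongly on $Y$, whence $T_nx\to Tx$ and (\ref{The main result}) follows; moreover $\lambda_i^{\perp}x_i'^{\perp}=T^{*}(y_i^{\perp})^{*}$ for the biorthogonal functionals $(y_i^{\perp})^{*}$, so $\lambda_i^{\perp}=\|T^{*}(y_i^{\perp})^{*}\|_{X^{*}}\le 2K\|T\|$ and $\{\lambda_i^{\perp}\}\in l_{\infty}$.

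The obstacle I anticipate is precisely this last uniform boundedness. The projections $P_n$ that force the correct tail behaviour $T^n:X\to Y_{n+1}$ are \emph{oblique} in $H$, so their $H$-norms need not stay bounded, and one must instead carry the norm-one orthogonality available in $H$ through the merely bounded, non-compact map $B$ into a uniform bound for the Banach-space projections $R_n$ on $Y$. Everything else—the two reductions, the biorthogonal computation, the containments of the Note, and the $l_{\infty}$ and independence assertions—is then routine.
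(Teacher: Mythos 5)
You have correctly located the crux --- the tension between (i) the oblique projections $P_n$ in $H$ that force the tail containment $T^n:X\to Y_{n+1}$ and (ii) the norm-one orthogonal projections needed for any uniform bound --- but you leave it unresolved, and your suggested fix (replace $P_n$ by the orthogonal projection onto $\spann\{Ax_1,\dots,Ax_n\}$) does not work: if $Q_n$ is that orthogonal projection, then $(I-Q_n)Ax$ is orthogonal to the $Ax_j$ but need not annihilate the functionals $B^{*}J_Yy_j$, so $B(I-Q_n)Ax$ need not lie in $Y_{n+1}$ and the containments of the Note are lost. The paper's proof dissolves the tension by orthogonalising against the \emph{nested subspaces} rather than against the finite spans: it sets $H_i=A(X_i)$ (in effect its closure) and takes $h_i^{\perp}$ to be the unit vector spanning the orthogonal complement of $H_{i+1}$ in $H_i$, each step having codimension one because $T$ is compact with trivial kernel and $A$ has dense range. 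Then $\{h_i^{\perp}\}$ is an orthonormal basis of $H$, the orthogonal projection onto $\spann\{h_1^{\perp},\dots,h_n^{\perp}\}$ has kernel exactly $H_{n+1}$, and since $B(H_{n+1})\subset\overline{T(X_{n+1})}\subset Y_{n+1}$ both desiderata (norm one, tail in $Y_{n+1}$) hold simultaneously. With this choice the series representation needs no uniform bound on Banach-space projections at all: one expands $A(x)=\sum_i\langle A(x),h_i^{\perp}\rangle_H\,h_i^{\perp}$ and applies the bounded map $B$, obtaining $T(x)=\sum_i\langle x,A^{*}(h_i^{\perp})\rangle_X\,B(h_i^{\perp})$, whence $y_i^{\perp}=B(h_i^{\perp})/\|B(h_i^{\perp})\|_Y$, $x_i'^{\perp}=A^{*}(h_i^{\perp})/\|A^{*}(h_i^{\perp})\|_{X^{*}}$ and $\lambda_i^{\perp}=\|B(h_i^{\perp})\|_Y\,\|A^{*}(h_i^{\perp})\|_{X^{*}}\le\|A\|\,\|B\|$, which gives $\{\lambda_i^{\perp}\}\in l_{\infty}$ with no appeal to your constant $K$. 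The uniform-boundedness machinery from the proof of Theorem \ref{Theorem 2.3} is invoked only afterwards, for the separate assertion that $\{y_i^{\perp}\}$ is a basis of $Y$.

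Two further discrepancies are worth noting. First, your identification $y_i^{\perp}:=y_i=Tx_i/\lambda_i$ is not what the construction produces: $B(h_i^{\perp})$ agrees with a multiple of $y_i$ only modulo $Y_{i+1}$, so the $y_i^{\perp}$ are genuinely new vectors; the Note's independence claim rests on the two containments, not on $y_i^{\perp}=y_i$. Second, your $l_{\infty}$ bound for $\lambda_i^{\perp}$ is made to depend on the unproved constant $K$, whereas it should be the trivial estimate above. In short, the architecture of your argument is right, but the one idea that makes the theorem work --- choosing the orthonormal system adapted to the chain $\overline{A(X_i)}$ so that Hilbert-space orthogonality and the filtration $\{Y_i\}$ are compatible --- is exactly the step you flag as an obstacle and do not supply.
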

\begin{proof}
Let  $T=B \circ A$ be a compact map with $\ker T=\{ 0 \}$ and range dense in $Y,$ and where $A\in B(X,H)$ and $B\in B(H,Y)$ are only bounded. Obviously we can expect that $\ker A = \{ 0 \} = \ker B$ and that the range of $A$ is dense in $H$.

We construct for $T$,  by the method described in section 2,  sequences $\{x_i\}$, $\{y_i\}$, $\{X_i\} $ and $\{Y_i\}$. Define $H_i=A(X_i)$, $h_i=A(x_i)/\| A(x_i)\|_H$ and $h_i^\perp$ as a unit vector from $H$ for which $H_i= \spann\{h_i^\perp\} \oplus H_{i+1}$ and $h_i^\perp \perp H_{i+1}$. We can see that $h_i^\perp \perp h_{i+1}^\perp $.

Using the above conditions on the kernel and range of $T,A$ and $B$ (mainly that range of $A$ is dense in $H$ and $T$ is compact)  we have that co-dimension of $H_i$ is $i-1$ and also that $\{h_i^\perp \}$ is a basis in $H$.
Let $x \in X$, $h\in H$ and $y\in Y$ such that $T(x)=y$, $A(x)=h$ and $B(h)=y$.  Then we have:

$$T(x)=B(A(x))= B(\sum \langle A(x), h_i^\perp \rangle_H  h_i^\perp)=
\sum \langle A(x), h_i^\perp \rangle_H  B(h_i^\perp)
$$
which gives us a series representation of $T$. Note that the idea of proof that $\{y_i\}$ is a basis of $Y$ from  Theorem \ref{Theorem 2.3} can be quite simply modified to fit the context of  this theorem by using the observation  that the norms of the projections from $H_i$ to $H_{i+1}$ in direction of $h_i^\perp$ are equal to 1, which guarantees that the norms of projections from $Y_i$ to $Y_{i+1}$ in direction of $y_i^\perp$, restricted to the image of unit ball under the map $B$, are less or equal to 1. 
Using the observation that $A^*(h)=\sum A^*(h_i^\perp) \langle h, h_i^\perp \rangle_H$ we can write:
\begin{align*}
	\langle   x, A^*(h) \rangle_X & = \langle x, \sum A^*(h_i^\perp) \langle h, h_i^\perp \rangle_H \rangle_X 
	= \sum \langle h, h_i^\perp \rangle_H \langle x, A^*(h_i^\perp)  \rangle_X \\
	& =\langle  h,\sum h_i^\perp \langle x, A^*(h_i^\perp) \rangle_X \rangle_H .
\end{align*}

Then $A(x)=\sum h_i^\perp \langle x, A^*(h_i^\perp) \rangle_X$
and we have:
$$T(x)=(BA(X))= \sum B(h_i^\perp) \langle x, A^*(h_i^\perp) \rangle_X.$$

Now we set ${x'}_i^{\perp}:={A^*}(h_i^{\perp})/\| A^*(h_i^{\perp})\|_{X^*}$, $y_i^{\perp} := B(h_i^\perp)/ \|B(h_i^\perp) \|_Y$, and $\lambda_i^{\perp}:= \|B(h_i^\perp) \|_Y \| A^*(h_i^{\perp})\|_{X^*}$ and we have (\ref{The main result}). 

Set $T_n(x)=\sum_{i=1}^n B(h_i^\perp) \langle x, A^*(h_i^\perp) \rangle_X$ and $T^n(x)=T(x)-T_n(x)$. Then from the definition and properties of $\{y_i\} $ and $\{Y_i\}$ it follows that $T^n:X \to Y_{n+1}$ and $T_n: X \to \spann \{ y_1, ..., y_n\}$.
\end{proof}

\
\\
{\bf Examples} 
\\
Let us consider the integral operator $Tf(x)=\int_0^x k(x,y) f(y) dy$ as a map from $L_p(I)$ into $L_q(I)$ where $1\le q\le 2 \le p \le \infty,$ $I=(0,1)$ and $k$ is defined below. When $T$  is compact, then in view of the above results we have
\[Tf(x)=\sum_{i=1}^{\infty} \lambda_i g_i(x) \int_I f(y) |f_i(x)|^{p-2}f_i(x) dx
\]
where $\|f_i\|_q=\|g_i\|_p=1$ and  $f_i, g_i$ are related to $j-$eigenfunctions of $T$. A natural question which arises in this case concerns the possible relation between the partial sum of the above series and the approximation numbers of $T$ apart from the obvious estimate $a_n(T)\le \lambda_n$. We leave this  question for further research.

In the next example we demonstrate the consequences of (\ref{duality-lambda}) and Theorem \ref{Theorem 2.4} for a particular Hilbertian operator. Suppose $1<p<\infty$ and $b>0$ and define the Hardy-type operator $H:L_p(0,b) \to L_2(0,b)$ and the corresponding dual operator  $H^*:L_2(0,b) \to L_{p'}(0,b)$ by
\[Hf(x)=\int_0^x f(t) dt, \qquad H^*f(x)=\int_x^b f(t) dt.
\]
Then from \cite[Theorem 4.6]{EL2} we have 
\[\|H:L_p\to L_2\|= b^{1-1/p+1/2}(p'+2)^{1-1/p'-1/2}(p')^{1/2}2^{1/p'}/B(1/p',1/2)
\] with the unique extremals being all non-zero multiples of the generalised trigonometric functions $\cos_{p,2}({\pi_{p,2}x \over 2b})$ (see \cite{EGL} for background information about these functions) and 
\[ \|H^*:L_2\to L_{p'}\|= b^{1-1/2+1/p'}(2+p')^{1-1/2-1/p'}2^{1/p'}(p')^{1/2}/B(1/2,1/p')
\] with the unique extremals being all non-zero  multiples of $\cos_{2,p'}({\pi_{2,p'}(b-x) \over 2b})$.
From the above definition of a $j$-eigenfunction we see that  the extremal functions for maps $H$ and $H^*$ correspond to the first eigenfunctions of the $(p,2)$-Laplacian and $(2,p')$-Laplacian eigenvalue problems: 
\[ ([u']^{p-1})'=\lambda u,\ 0\le t \le b \qquad \mbox{ with } u(0)=u'(b)=0  \]
and 
\[ u''=\lambda [u]^{p'-1},\ 0\le t \le b \qquad \mbox{ with } u'(0)=u(b)=0,  \]
respectively (here $[u]^p:= |u|^{p-2}u$ correspond to the duality mapping $J_{L_p}$.)   

As a consequence of  the definition and properties of j-eigenfunctions for map $H$, we find that all non-zero multiples of $\sin_{p,2}({\pi_{p,2}x \over 2b})$, which are just the image of $\cos_{p,2}({\pi_{p,2}x \over 2b})$ under the map $H$ multiplied by an appropriate constant  (see \cite{EGL} for more about the relation between $\sin_{p,q}$ and $\cos_{p,q}$ functions) are extremals of $H^*$ and then equal to a constant multiple of $\cos_{2,p'}({\pi_{2,p'}(b-x) \over 2b})$. This is in agreement with  \cite[Prop. 3.1, 3.2]{EGL}.

Now, by considering the Hilbertian map $K:=H^*H:L_p(0,b) \to L_p(0,b)$, we can see that the extremal function for this map is equal to the first eigenfunction for the $(p,p')$-bi-Laplacian operator:
\[ ([u'']^{p-1})'' = \lambda [u]^{p'-1}, \qquad 0 \le t \le b
\]
\[u(0)=u'(T)=[u''(0)]^{p-1}=[[u''(b)]^{p-1}]'=0.
\]
By using information about the extremal functions for maps $H$ and $H^*$ and their relation it is quite simple to see that the extremal functions and the above eigenvalue must be a non-zero multiple of $\sin_{2,p'}({\pi_{2,p'}x \over 2b}).$ We can check this by a direct computation with the help of \cite[Lemma 2.2, Prop. 3.1, 3.2]{EGL}. 

To the best of our knowledge this is, outside the trivial case for the (2,2)-bi-Laplacian operator, the only known case in which  the first eigenfunction of (p,q)-bi-Laplacian operator is exactly described. This example raises the obvious questions whether eigenfunctions for general (p,q)-bi-Laplacian operators can be expressed directly and how different is their structure from that of eigenfunctions for  (p,q)-Laplacians. These matters will be addressed in a forthcoming paper by L.Boulton and J.Lang.

Now we recall two operator ideals which can be used to split the set of compact operators. 
The members of both these operator ideals are called  $p-$compact operators, which can bring some confusion. Pietsch was the first to use an ideal defined using this
terminology, and we shall call its members   $\mathfrak{p \text{-} compact}$ maps. A more recent definition of $p-$compactness was introduced by by Sinha and Karn and we will simply say that the corresponding maps are $p-$compact. We start by recalling the more recent definition.

Let $p\in(1,\infty)$ and suppose that $X,Y$ are Banach spaces. Following
\cite[Definition 2.1]{SK} we say that a subset $K$ of a Banach space $X$ is \textit{relatively
}$p-$\textit{compact} if there is a sequence\textit{ }$\left\{  x_{n}\right\}
$ in $X,$ with $\left\{  \left\Vert x_{n}\right\Vert _{X}\right\}  \in l_{p},$
such that
\[
K\subset\left\{  \sum\nolimits_{n=1}^{\infty}\alpha_{n}x_{n}:\sum
\nolimits_{n=1}^{\infty}\left\vert \alpha_{n}\right\vert ^{p^{\prime}}%
\leq1\right\}  ,
\]
where $1/p+1/p^{\prime}=1.$ When $p=1$ this definition takes the form%
\[
K\subset\left\{  \sum\nolimits_{n=1}^{\infty}\alpha_{n}x_{n}:\sup
_{n}\left\vert \alpha_{n}\right\vert \leq1\right\}  ,
\]
where $\left\{  \left\Vert x_{n}\right\Vert _{X}\right\}  \in l_{1}.$ These
definitions stem from a classical result of Grothendieck \cite{Gro} which
states that $K$ is relatively compact if and only if if there is a
sequence\textit{ }$\left\{  x_{n}\right\}  $ in $X,$ with $\left\{  \left\Vert
x_{n}\right\Vert _{X}\right\}  \in c_{0},$ such that%
\[
K\subset\left\{  \sum\nolimits_{n=1}^{\infty}\alpha_{n}x_{n}:\sum
\nolimits_{n=1}^{\infty}\left\vert \alpha_{n}\right\vert \leq1\right\}  .
\]
With this in mind we can say that relatively compact sets are relatively
$\infty$-compact. It is easy to see that  if $1\leq p\leq q\leq\infty,$ then a relatively
$p$-compact set is relatively $q$-compact (see \cite{SK}).  Corresponding to this notion are
the $p$-compact operators, defined in \cite[Definition 2.2]{SK} as follows:\newline

\begin{definition}
	\label{Definition 3.1}Given $p\in\lbrack1,\infty],$ and Banach spaces $X, Y,$ a map $T\in B(X,Y)$ is
	said to be $p$-compact if $T(B)$ is relatively $p$-compact whenever $B$ is a
	bounded subset of $X.$
\end{definition}

An equivalent form of this condition is that there is a sequence\textit{
}$\left\{  x_{n}\right\}  $ in $Y,$ with $\left\{  \left\Vert x_{n}\right\Vert
_{Y}\right\}  \in l_{p},$ such that
\[
T(B_{X})\subset\left\{  \sum\nolimits_{n=1}^{\infty}\alpha_{n}x_{n}%
:\sum\nolimits_{n=1}^{\infty}\left\vert \alpha_{n}\right\vert ^{p^{\prime}%
}\leq1\right\}
\]
when $p\in(1,\infty),$ with obvious adaptations when $p=1$ or $\infty.$ A norm
is introduced on the family $K_{p}=K_{p}(X,Y)$ of all $p$-compact
\ maps from $X$ to $Y$ by
\[
\left\Vert T\right\Vert _{K_{p}}:=\inf\left\Vert \left\{  \left\Vert
x_{n}\right\Vert_Y \right\}  \right\Vert _{l_{p}},
\]
where the infimum is taken over all sequences $\left\{  x_{n}\right\}  $ in
the above definition. With this norm $K_{p}$ is a Banach operator ideal (see
\cite{Oja}). Note that $K_{\infty}$ coincides with the ideal of compact operators and $K_1$ coincides with of nuclear operators (which we will denote by  $\mathcal{N}$).   
Also we have $K_p \subset K_q$ whenever $p < q$.
From \cite[Theorem 1]{Pie3} we have the following connection between $K_p$ operator ideals and $(r,s,q)$-nuclear operators:

\[K_p=\mbox{ surjective hull of } \mathcal{N}(p,1,p). \]

At this point we  recall the term \textquotedblleft%
$p$-compact map\textquotedblright as it was used many years ago by Pietsch in
\cite{Pie1}, section 18.3.1. We shall refer to the condition required by him as p-compactness in the sense of Pietsch or shortly $\mathfrak{p \text{-} compactness}$. By Theorem 18.3.2 of \cite{Pie1}, $T\in B(X,Y)$ is
$p$-compact in the sense of Pietsch if and only if there are maps $A\in K(X,l_{p})$ and
$B\in K(l_{p},Y)$ such that $T=B\circ A.$ The family of all such maps will be
denoted by $\ \mathfrak{K}_p=\mathfrak{K}_{p}(X,Y)$ and
\[
\left\Vert T\right\Vert _{\mathfrak{K}_{p}}:=\inf\left\Vert B\right\Vert
\left\Vert A\right\Vert ,
\]
where the infimum is taken over all possible factorisations $T=B\circ A$ of
$T.$ With this norm $\mathfrak{K}_{p}$ is a Banach operator ideal; and it
follows from \cite{Pie1}, 18.3.2 and 18.1.3 \ that $\left(  \mathfrak{K}_{{p}%
},\left\Vert . \right\Vert _{\mathfrak{K}_{p}}\right)  $ coincides with
the operator ideal $\mathcal{N}\left(  \infty,p,p^{\prime}\right)  $ of
$\left(  \infty,p,p^{\prime}\right)  -$nuclear operators. For  each $T\in
\mathfrak{K}_{{p}}$ there is a representation $T=B\circ S_{0}\circ A$, where
$A\in B(X,l_{p}),A\in B(l_{p},Y)$ and $S_{0}\in B(l_{p})$ is a diagonal
operator of the form $S_{0}\left(  \left\{  \xi_{n}\right\}  \right)
=\left\{  \sigma_{n}\xi_{n}\right\}  $ with $\left\{  \sigma_{n}\right\}  \in
c_{0}.$ For discussions of the relations and differences between the ideal of
$p$-compact maps and that of $\mathfrak{p \text{-} compact}$ maps see \cite{Oja} and
the last paragraph of \cite{Pie3}.
Obviously we have  that every $\mathfrak{2 \text{-} compact}$ map is a compact Hilbertian map { (the converse is not true)}. 

\begin{proposition}
	\label{Proposition 3.6}\ The following inclusions hold:%
	\[
	\mathcal{N}\subset K_{2}\subset {\mathfrak{K_2}}\subset\Gamma_{2}.
	\]
	
\end{proposition}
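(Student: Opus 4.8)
The plan is to treat the three inclusions separately, the middle one being the only substantial step. For the first inclusion I would simply combine two facts recorded above: the ideal $\mathcal{N}$ of nuclear maps coincides with $K_1$, and $K_p \subset K_q$ whenever $p < q$. Taking $p=1$, $q=2$ gives $\mathcal{N} = K_1 \subset K_2$ at once. The third inclusion is equally short: if $T \in \mathfrak{K}_2(X,Y)$ then, by the characterisation recalled above, there are $A \in K(X,l_2)$ and $B \in K(l_2,Y)$ with $T = B \circ A$; since $l_2$ is a Hilbert space and $A,B$ are in particular bounded, this is exactly a Hilbertian factorisation of $T$, so $T \in \Gamma_2$ (and $\gamma_2(T) \le \|A\|\,\|B\|$).

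The heart of the matter is $K_2 \subset \mathfrak{K}_2$. Let $T \in K_2(X,Y)$; by definition there is a sequence $\{x_n\}$ in $Y$ with $\{\|x_n\|_Y\} \in l_2$ and
\[ T(B_X) \subset \Big\{ \sum\nolimits_n \alpha_n x_n : \sum\nolimits_n |\alpha_n|^2 \le 1 \Big\}. \]
I would first introduce the bounded operator $B : l_2 \to Y$, $B(\{\alpha_n\}) = \sum_n \alpha_n x_n$ (well defined and bounded by Cauchy--Schwarz, as $\{\|x_n\|_Y\} \in l_2$), so that the displayed inclusion reads $T(B_X) \subset B(B_{l_2})$. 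The key lemma to establish is a lifting statement: \emph{there is a bounded $A : X \to l_2$ with $\|A\| \le 1$ and $T = B \circ A$.} This is where the Hilbert space structure of $l_2$ is essential. Writing $N = \ker B$ and using the orthogonal decomposition $l_2 = N \oplus N^{\perp}$, the norm--decreasing orthogonal projection onto $N^\perp$ gives $B(B_{l_2}) = B(B_{N^\perp})$, while $B|_{N^\perp}$ is injective onto its range $R = B(l_2)$. Since $T(B_X) \subset B(B_{N^\perp}) \subset R$ and $R$ is a linear subspace, $T$ maps $X$ into $R$, so I can set $A := (B|_{N^\perp})^{-1} \circ T$. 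Linearity is automatic; for $x \in B_X$ the inclusion gives $Tx = Bv$ with $v \in B_{N^\perp}$, whence $Ax = v$ and $\|Ax\| \le 1$ (so $\|A\| \le 1$); and $B A = T$ by construction.

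It remains to arrange that \emph{both} factors are compact, which the lemma alone does not give (it yields $A$ merely bounded). Here I would extract a diagonal from $B$. Using $\{\|x_n\|_Y\} \in l_2$, choose $\sigma_n \to 0$ with $\{\|x_n\|_Y/\sigma_n\} \in l_2$ (for instance $\sigma_n = a_n^{1/4}$ with $a_n = \sum_{k \ge n} \|x_k\|_Y^2$; the tail--sum estimate $\sum_n (a_n - a_{n+1})/a_n^{1/2} \le 2 a_1^{1/2}$ shows $\sum_n \|x_n\|_Y^2/\sigma_n^2 < \infty$). Then factor $B = B_0 \circ D$, where $D(\{\alpha_n\}) = \{\sigma_n \alpha_n\}$ and $B_0(\{\beta_n\}) = \sum_n \beta_n (x_n/\sigma_n)$; here $B_0$ is bounded (again Cauchy--Schwarz) and $D$ is a compact diagonal operator. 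Splitting $D = D' \circ D''$ with $D', D''$ the diagonal operators of entries $\sigma_n^{1/2} \to 0$ (both compact), I obtain
\[ T = B \circ A = (B_0 \circ D') \circ (D'' \circ A). \]
Now $B_0 \circ D' \in K(l_2,Y)$ (bounded composed with compact) and $D'' \circ A \in K(X,l_2)$ (compact composed with bounded), so this exhibits $T$ as a member of $\mathfrak{K}_2$. I expect the lifting lemma to be the main obstacle, since it is the only point that genuinely uses Hilbert space geometry (the complementation of $\ker B$); indices with $x_n = 0$ are handled trivially by discarding them. One could alternatively deduce $K_2 \subset \mathfrak{K}_2$ from the ideal identities $K_2 = $ surjective hull of $\mathcal{N}(2,1,2)$ and $\mathfrak{K}_2 = \mathcal{N}(\infty,2,2)$ recorded above, but the direct factorisation seems cleaner and self--contained.
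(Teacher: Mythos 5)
Your argument is correct, but it takes a genuinely different route from the paper. The paper disposes of the whole chain in three lines by quoting operator-ideal identities: $\mathcal{N}=\mathcal{N}(1,1,1)\subset\mathcal{N}(2,1,2)\subset\mathcal{N}(\infty,2,2)$ by Proposition 18.1.5 of Pietsch's book, together with $K_{2}=\mathcal{N}(2,1,2)$ (Oja) and $\mathfrak{K}_{2}=\mathcal{N}(\infty,2,2)$ (recalled earlier in the text); the inclusion into $\Gamma_{2}$ is then immediate. You instead prove the only substantive inclusion $K_{2}\subset\mathfrak{K}_{2}$ by hand: the synthesis operator $B:l_{2}\to Y$, the norm-one lifting $A=(B|_{N^{\perp}})^{-1}\circ T$ obtained from the orthogonal complementation of $\ker B$ (this is the standard lifting argument, and your verification that $\|A\|\le 1$ via the norm-decreasing projection onto $N^{\perp}$ is sound — note that no boundedness of $(B|_{N^{\perp}})^{-1}$ on the possibly non-closed range is needed), and then the redistribution of decay through a compact diagonal $D=D'D''$ so that both factors $B_{0}D'$ and $D''A$ become compact; the tail-sum estimate $\sum_{n}(a_{n}-a_{n+1})a_{n}^{-1/2}\le 2a_{1}^{1/2}$ justifying the choice of $\sigma_{n}$ is a correct standard computation (with zero terms discarded, and the finite-rank case trivial). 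Your treatment of the outer inclusions ($\mathcal{N}=K_{1}\subset K_{2}$ from facts stated in the paper, and $\mathfrak{K}_{2}\subset\Gamma_{2}$ because $l_{2}$ is a Hilbert space) matches what the paper implicitly relies on. What your approach buys is a self-contained proof that does not depend on the classification of $(r,p,q)$-nuclear ideals and that makes visible exactly where Hilbert space geometry enters; what the paper's approach buys is brevity and the extra information that these inclusions sit inside a known scale of nuclearity classes. Both are valid.
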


\begin{proof}
	We choose to give a proof that uses the techniques of operator ideals. In the
	notation of \cite{Pie1}, 18.1 in which $\mathcal{N}(r,p,q)$ stands for the
	class of all $(r,p,q)-$nuclear maps from $X$ to $Y,$ we have $\mathcal{N}%
	=\mathcal{N}(1,1,1);$ and by Proposition 18.1.5 of \cite{Pie1},
	\[
	\mathcal{N}(1,1,1)\subset\mathcal{N}(2,1,2)\subset\mathcal{N}(\infty,2,2).
	\]
	From \cite{Oja}, p. 952, $K_{2}=$ $\mathcal{N}(2,1,2);$ from our results
	above, $\mathfrak{K_{{2}}}=\mathcal{N}(\infty,2,2).$ The proof is complete.
\end{proof}

As an immediate consequence we have the following:
\begin{corollary}
Let $X,Y$ be uniformly smooth and uniformly convex Banach spaces and suppose that $T \in {\mathfrak{K_2}}(X,Y)$ or $T\in K_p(X,Y)$ where $1\le p \le 2$. Then, with the help of $j-$eigenfunctions, we have
	\begin{equation} \label{The main result 02}
	T(x)= \sum_{i=1}^{\infty} \lambda_i^{\perp} y_i^{\perp} \langle x, x_i'^{\perp} \rangle_X,  \qquad \left(  x\in X\right), 
\end{equation}
\end{corollary}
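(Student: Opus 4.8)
The plan is to reduce both hypotheses to the hypotheses of the (unnumbered) theorem that yields the representation (\ref{The main result}), and then to apply it directly, since (\ref{The main result 02}) is word-for-word its conclusion. That theorem demands three things of $T$: that it be compact, that it be Hilbertian (i.e.\ $T\in\Gamma_2$), and that it have trivial kernel and range dense in $Y$. The last two are taken over as standing assumptions (they are inherited rather than derived), so the only real work is to verify compactness and membership in $\Gamma_2$ starting from each of the two stated hypotheses.

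First I would establish that $T\in\Gamma_2$. If $T\in\mathfrak{K_2}(X,Y)$, then by Proposition \ref{Proposition 3.6} we have $\mathfrak{K_2}\subset\Gamma_2$, so $T$ is Hilbertian at once. If instead $T\in K_p(X,Y)$ with $1\le p\le 2$, I would first use the stated inclusion $K_p\subset K_q$ (valid for $p<q$) to place $T$ in $K_2$, and then chain $K_2\subset\mathfrak{K_2}\subset\Gamma_2$ from Proposition \ref{Proposition 3.6}. In both cases $T\in\Gamma_2$, so $T$ admits a factorisation $T=B\circ A$ through a Hilbert space.

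Next I would check compactness. When $T\in\mathfrak{K_2}$, the Pietsch characterisation (Theorem 18.3.2 of \cite{Pie1}) supplies compact maps $A\in K(X,l_2)$ and $B\in K(l_2,Y)$ with $T=B\circ A$, so $T$ is compact as a composition of compact maps. When $T\in K_p$, I would use that $K_p\subset K_\infty$ together with the fact that $K_\infty$ coincides with the ideal of compact operators; equivalently, a relatively $p$-compact set is relatively $\infty$-compact and hence relatively compact by Grothendieck's theorem, so $T(B_X)$ is relatively compact. Either way $T\in K(X,Y)$.

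With $T$ now known to be a compact Hilbertian map with trivial kernel and dense range, the theorem giving (\ref{The main result}) applies verbatim and delivers (\ref{The main result 02}). I expect no genuine obstacle: the entire content sits in the inclusion chain of Proposition \ref{Proposition 3.6} and in the identifications $K_\infty=K(X,Y)$ and $\mathfrak{K_2}\subset\Gamma_2$. The only point needing a word of care is to make explicit that the trivial-kernel and dense-range conditions belong to the standing set-up, so that the hypotheses of the main theorem are genuinely met.
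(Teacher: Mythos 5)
Your proposal is correct and follows essentially the same route the paper intends: the corollary is presented as an immediate consequence of Proposition \ref{Proposition 3.6} (placing $T$ in $\Gamma_2$, with compactness coming from the $\mathfrak{K_2}$ factorisation through compact maps, respectively from $K_p\subset K_\infty=K(X,Y)$) combined with the theorem yielding (\ref{The main result}). Your remark that the trivial-kernel and dense-range hypotheses must be understood as part of the standing assumptions is a fair and worthwhile point of care, since the corollary as stated omits them.
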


Since we were not able to find  many examples of $p-$compact operators in the literature we close this section with some concrete illustrations.

{\bf Examples} \\
(1) We consider the classical Hardy operator.

Thus let $I=(0,1)$ and $p\in (1,\infty );$ define $T:L_{2}(I)\rightarrow
L_{p}(I)$ by  
\[
(Tf)(s)=\int_{0}^{s}f(t)dt,s\in I.
\]%
Define functions $f_{n}\in L_{2}(I)$ by 
\[
f_{n}(t)=\left\{ 
\begin{array}{cc}
	1, & n=1, \\ 
	\cos (n\pi t), & n>1.%
\end{array}%
\right. 
\]%
Then $\left\{ f_{n}\right\} $ is an orthogonal basis of $L_{2}(I);$ given
any $f\in B$ (the unit ball in $L_{2}(I)$), 
\[
f=\sum_{n=1}^{\infty }c_{n}f_{n},
\]%
where 
\[
\left\Vert f\right\Vert _{2}^{2}=\sum_{n=1}^{\infty }\left\vert
c_{n}\right\vert ^{2}\left\Vert f_{n}\right\Vert _{2}^{2}=\left\vert
c_{1}\right\vert ^{2}+\frac{1}{2}\sum_{n=2}^{\infty }\left\vert
c_{n}\right\vert ^{2}
\]%
since%
\[
\left\Vert f_{n}\right\Vert _{2}^{2}=\int_{0}^{1}\cos ^{2}(n\pi t)dt=\frac{1}{2}\mbox{ }\left( n>1\right) .
\]%
Let $f\in B.$ Then $Tf=\sum_{n=1}^{\infty }c_{n}Tf_{n}$ and 
\[
(Tf_{n})(s)=\int_{0}^{s}f_{n}(t)dt=\left\{ 
\begin{array}{cc}
	s, & n=1, \\ 
	\frac{1}{n\pi }\sin (n\pi s), & n>1.%
\end{array}%
\right. 
\]

Thus if $n>1,$ 
\begin{eqnarray*}
	\left\Vert Tf_{n}\right\Vert _{p}^{p} &=&\int_{0}^{1}\left\vert \frac{\sin
		(n\pi s)}{n\pi }\right\vert ^{p}ds=(n\pi )^{-1-p}\int_{0}^{n\pi }\left\vert
	\sin t\right\vert ^{p}dt \\
	&=&n(n\pi )^{-1-p}\int_{0}^{\pi }\left\vert \sin t\right\vert ^{p}dt \\
	&=&n(n\pi )^{-1-p}\left\{ \int_{0}^{\pi /2}\sin ^{p}tdt+\int_{\pi /2}^{\pi
	}\sin ^{p}tdt\right\}  \\
	&=&n(n\pi )^{-1-p}\left\{ \int_{0}^{\pi /2}\sin ^{p}tdt+\int_{0}^{\pi
		/2}\cos ^{p}tdt\right\}  \\
	&=&Cn^{-p}.
\end{eqnarray*}

It follows that \ if $q\in (1,2],$ $Tf=\sum_{n=1}^{\infty }c_{n}Tf_{n},$
where $\left\{ c_{n}\right\} \in l_{2}\hookrightarrow l_{q^{\prime }}$ and $%
\left\{ Tf_{n}\right\} \in l_{q}\left( L_{p}(I)\right)$: in other words, $T$
is $q-$compact for all $q\in (1,2]$ and hence for all $q\in (1,\infty ].$
 By Theorem 4 of {\cite{EGL}}, $T:L_{p}(I)\rightarrow L_{p}(I)$
is not nuclear $\left( 1<p<\infty \right) $ and so is not $1-$compact. It
follows that $T:L_{2}(I)\rightarrow L_{2}(I)$ is $q-$compact for all $q\in
(1,\infty ]$ \ but not nuclear.

When  $T$ $\ $is regarded as a map from $L_{p}(I)\rightarrow L_{q}(I),$ with 
$p,q\in (1,\infty ),$ we use the fact that the generalised trigonometric
functions $g_{n}$ defined by $g_{n}(t)=\cos _{p,p^{\prime }}(n\pi
_{p,p^{\prime }}t)$ $\left( n\in \mathbb{N}_{0}\right) $ form a basis of $%
L_{p}(I)$ if $p\in \lbrack p_{1},p_{2}],$ where $p_{1}\in (1,2)$ and $%
p_{2}\in (2,\infty )$ \ are calculable numbers (see \cite{EM} and \cite{EL2}). \ We suppose
henceforth that $p$ is constrained in this way. Then all $g$ in the unit
ball $B_{p}$ of $L_{p}(I)$ can be represented in the form 
\[
g=\sum_{n=1}^{\infty }c_{n}g_{n}.
\]%
Calculations similar to those for $Tf_{n}$ in the last case (see also \cite%
{EL}, p.40) show that the basis $\left\{ g_{n}\right\} $ is seminormalised
in the sense that $0<\inf \left\Vert g_{n}\right\Vert _{p}\leq \sup
\left\Vert g_{n}\right\Vert _{p}<\infty .$ Thus by Proposition 1.2.18 of \cite%
{EE}, $\left\{ c_{n}\right\} \in l_{r^{\prime }}$ for some $r^{\prime }\in
(1,\infty ).$ As in the previous case it can be shown that $\left\Vert
Tg_{n}\right\Vert _{q}\leq Kn^{-1},$ so that $\left\{ Tg_{n}\right\} \in
l_{r}\left( L_{q}(I)\right) :\ \ $thus $T$ is $r-$compact. We conclude that
if $p$ is not too far from $2,$ more precisely $p\in \lbrack p_{1},p_{2}],$
then there exists $r$ $\in (1,\infty )$ such that $T$ is $s-$compact for all 
$s\geq r.$
\
\\
\
\\
(2) Another example is provided by a simple Sobolev embedding. Let $Q=(-\pi ,\pi
)$ and $B=(-1,1);$ let $W_{2}^{1}(Q)$ be the usual (complex) first-order
Sobolev space \ based on $L_{2}(Q),$ and put%
\[
X = \{ f\in W_{2}^{1}(Q): \mbox{supp } f\subset \overline{B} \} .
\]
Endowed with the inherited norm, $X$ is a closed subspace of $W_{2}^{1}(Q).$
Denote by $I$ the natural embedding of $X$ in $L_{2}(Q)$ and let $f$ belong
to the unit ball of $X.$ Then (see \cite{HT}, 4.4) 
\[
(If)(x)=\sum_{m\in \mathbb{Z}}a_{m}h_{m}(x)\mbox{ \ }\left( x\in Q\right) ,
\]%
where $h_{m}(x)=\left( 2\pi \right) ^{-1/2}e^{imx};$ $\left\{ h_{m}\right\} $
is an orthonormal basis of $L_{2}(Q);$ moreover, 
\[
\sum_{m\in \mathbb{Z}}\left( 1+m^{2}\right) \left\vert a_{m}\right\vert
^{2}\sim \left\Vert f\right\Vert _{X}^{2}=1.
\]%
Thus 
\[
If=\sum_{m\in \mathbb{Z}}\left( 1+m^{2}\right) ^{1/2}a_{m}\cdot \left(
1+m^{2}\right) ^{-1/2}h_{m},
\]%
where $\left\{ \left( 1+m^{2}\right) ^{1/2}a_{m}\right\} \in l_{2}$ and $%
\left\{ \left\Vert \left( 1+m^{2}\right) ^{-1/2}h_{m}\right\Vert
_{2,Q}\right\} \in l_{2}.$ Hence $I$ is $2-$compact and so is $q-$compact for
all $q\geq 2.$

\bigskip

\bigskip D. E. Edmunds, Department of Mathematics, University of Sussex,
Brighton BN1 9QH, U.K.

J. Lang, Department of Mathematical Sciences, Ohio State University, Columbus,
OH 43210-1174, U.S.A.

\ 
\end{document}